\documentclass{amsart}
\usepackage[spanish]{babel}
\usepackage{amssymb}	
\usepackage{amsthm}
\usepackage{graphicx}
\usepackage{tikz}

\theoremstyle{plain}
\newtheorem{definition}{Definici\'on}[section]
\newtheorem{question}[definition]{Pregunta}
\newtheorem{theorem}[definition]{Teorema}
\newtheorem{fact}[definition]{Observaci\'on}

\newtheorem{lemma}[definition]{Lema}

\newtheorem{example}[definition]{Ejemplo}

\title{Un teorema de Ramsey para los enteros}

\author{Andr\'es Eduardo Caicedo}
\address{
Mathematical Reviews\\
  416 Fourth Street\\ 
  Ann Arbor, MI 48103-4820\\ 
  USA
}
\email{aec@ams.org}
\urladdr{http://www-personal.umich.edu/~caicedo/}
\subjclass[2020]{03E02}

\begin{document}
\maketitle

\section{El teorema de Ramsey} \label{section:intro}

En todo grupo de diez personas siempre podemos encontrar, o bien cuatro que se han hablado  
mutuamente, o bien tres que no se han hablado entre s\'\i.

\'Este es un caso particular del teorema de Ramsey, un resultado fundamental en la combinatoria de 
grafos.

\begin{theorem}[Ramsey {\cite[Theorem B]{MR1576401}}] \label{thm:ramseyf}
Para todos los enteros positivos $n,m$ podemos encontrar un n\'umero $N$ tal que en todo grafo 
con $N$ v\'ertices podemos encontrar, o bien una copia del grafo completo de $n$ v\'ertices, o bien 
$m$ v\'ertices sin arcos entre s\'\i.
\end{theorem}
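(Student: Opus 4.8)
El plan es deducir el enunciado finito de su versi\'on infinita, como sugiere el t\'\i tulo. Primero establecer\'e el \emph{teorema de Ramsey infinito} para pares: para toda coloraci\'on $c\colon[\mathbb{N}]^2\to\{0,1\}$ de los pares de enteros existe un conjunto infinito $H\subseteq\mathbb{N}$ que es \emph{homog\'eneo}, esto es, tal que $c$ es constante sobre $[H]^2$. Interpretando el color $0$ como la presencia de un arco y el color $1$ como su ausencia, esto afirma que todo grafo con v\'ertices en $\mathbb{N}$ contiene o bien un grafo completo infinito o bien un conjunto infinito de v\'ertices sin arcos entre s\'\i.

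La demostraci\'on de la versi\'on infinita procede por aplicaci\'on iterada del principio del palomar. Pongo $A_0=\mathbb{N}$ y, dado un conjunto infinito $A_i$, tomo $a_i=\min A_i$; como hay infinitos $x\in A_i\setminus\{a_i\}$ y solo dos colores posibles para los pares $\{a_i,x\}$, existen un color $\epsilon_i$ y un conjunto infinito $A_{i+1}\subseteq A_i\setminus\{a_i\}$ con $c(\{a_i,x\})=\epsilon_i$ para todo $x\in A_{i+1}$. Obtengo de esta manera $a_0<a_1<\cdots$ y colores $\epsilon_i$ tales que el color de $\{a_i,a_j\}$, con $i<j$, depende solo de $i$, a saber, es $\epsilon_i$. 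Alg\'un valor $\epsilon$ se repite para infinitos \'\i ndices $i$, y el conjunto de los $a_i$ correspondientes es un conjunto homog\'eneo infinito de color $\epsilon$.

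Para pasar al enunciado finito argumentar\'e por contradicci\'on mediante un argumento de compacidad (el lema de K\"onig). Fijo $n,m$ y supongo que ning\'un $N$ sirve. Entonces, para cada $N$, existe un grafo con v\'ertices $\{0,\dots,N-1\}$ ---o, equivalentemente, una coloraci\'on de $[\{0,\dots,N-1\}]^2$ con dos colores--- que llamar\'e \emph{malo}: sin grafo completo de $n$ v\'ertices del color que representa los arcos y sin conjunto de $m$ v\'ertices del color que representa su ausencia. Como todo subgrafo inducido de un grafo malo es malo, las coloraciones malas forman, bajo la relaci\'on de restricci\'on, un \'arbol con ra\'\i z finitamente ramificado (cada nivel es finito, pues hay solo finitas coloraciones de un conjunto finito dado) e infinito (por hip\'otesis hay una coloraci\'on mala de cada tama\~no). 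Por el lema de K\"onig el \'arbol posee una rama infinita, que determina una coloraci\'on de $[\mathbb{N}]^2$ cuyas restricciones finitas son todas malas. Esta coloraci\'on no tiene entonces ning\'un conjunto homog\'eneo infinito, pues tal conjunto contendr\'\i a un grafo completo de $n$ v\'ertices o un conjunto independiente de $m$ v\'ertices; y esto contradice la versi\'on infinita.

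El obst\'aculo principal ser\'a el montaje de la compacidad: comprobar que la restricci\'on de una coloraci\'on mala sigue siendo mala (de modo que el \'arbol quede bien definido y sea infinito) y que la rama infinita produce en efecto una coloraci\'on sin conjunto homog\'eneo infinito. Una v\'\i a alternativa, puramente finita, evita el rodeo por el infinito: probar por inducci\'on en $n+m$ la existencia de $N$ junto con la cota $R(n,m)\le R(n-1,m)+R(n,m-1)$, donde $R(n,m)$ denota el menor $N$ que sirve; en un grafo con $R(n-1,m)+R(n,m-1)$ v\'ertices se separan los vecinos y los no vecinos de un v\'ertice fijo y se aplica el palomar. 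La elegir\'\i a si se prefiriese una prueba autocontenida y con cotas expl\'\i citas.
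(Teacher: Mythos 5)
Tu propuesta es correcta, pero conviene aclarar que el art\'\i culo no incluye demostraci\'on alguna de este teorema: lo cita directamente del trabajo original de Ramsey, de modo que no hay un argumento ``del paper'' con el cual compararla paso a paso. Tu ruta principal ---probar primero la versi\'on infinita mediante aplicaciones iteradas del principio de casillas (refinando conjuntos $A_0\supseteq A_1\supseteq\cdots$ y extrayendo la sucesi\'on $a_0<a_1<\cdots$ con colores $\epsilon_i$) y luego deducir la versi\'on finita por compacidad v\'\i a el lema de K\"onig--- es est\'andar y est\'a bien armada: la verificaci\'on de que las restricciones de coloraciones malas son malas hace que el \'arbol est\'e bien definido y sea infinito, y la rama infinita produce una coloraci\'on de $[\mathbb{N}]^2$ sin conjunto homog\'eneo infinito, contradiciendo la versi\'on infinita. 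N\'otese que la versi\'on infinita que demuestras es exactamente el teorema \ref{thm:ramseyinf} del art\'\i culo (el Theorem A de Ramsey), as\'\i\ que tu deducci\'on respeta el orden l\'ogico e hist\'orico del trabajo original, aunque Ramsey mismo dio para el caso finito un argumento directo en lugar del rodeo por compacidad. Tu alternativa puramente finita, con la recursi\'on $R(n,m)\le R(n-1,m)+R(n,m-1)$, tambi\'en es correcta (requiere fijar los casos base, por ejemplo $R(n,2)=n$ y $R(2,m)=m$) y es precisamente la generalizaci\'on del argumento que el art\'\i culo s\'\i\ presenta en el ejemplo \ref{ex:fin} para probar $6\to(3,3)^2$: fijar un v\'ertice, separar por color sus arcos incidentes y aplicar el palomar. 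Lo que cada v\'\i a aporta: la de compacidad conecta con el tema central de la nota (el paso de lo finito a lo infinito) pero no da cotas; la recursiva es autocontenida y da cotas expl\'\i citas como $R(n,m)\le\binom{n+m-2}{n-1}$.
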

 
Frank Ramsey \cite{MR1576401} estableci\'o (una versi\'on general de) este resultado, con el que 
se  inaugur\'o la teor\'\i a que lleva su nombre, con el fin de estudiar la consistencia de ciertas 
f\'ormulas  de la l\'ogica de primer orden. Desde entonces se han encontrado much\'\i simas 
aplicaciones en diversas \'areas, as\'\i\ como generalizaciones y variantes. 

Ramsey present\'o dos versiones de su teorema, la versi\'on finita que enunciamos arriba, y una 
infinita. En esta nota estamos interesados en la versi\'on infinita. Primero, algo de notaci\'on: dado 
un conjunto $X$, el grafo completo $K_X$ tiene a $X$ como conjunto de v\'ertices, con arcos 
(aristas) entre todo par de v\'ertices distintos. El grafo vac\'\i o $\bar K_X$ tiene a $X$ como 
conjunto de v\'ertices, y no tiene arcos. Escribimos $K_n$ para denotar el grafo completo en un 
conjunto de $n$ v\'ertices. En esta nota, los subgrafos son siempre \emph{inducidos}, es decir que 
si $\mathcal G=(V_G,A_G)$ es un grafo, $\mathcal H=(V_H,A_H)$ es un subgrafo si y s\'olo si 
$V_H\subseteq V_G$ and $A_H=\{\{v,w\}\in A_G:v,w\in V_H\}$. En particular, que $\mathcal G$ 
contiene una copia de un grafo $\mathcal J$ quiere decir que la contiene como subgrafo inducido.  

\begin{theorem}[Ramsey {\cite[Theorem A]{MR1576401}}] \label{thm:ramseyinf}
Dado un conjunto infinito $X$, si los arcos de $K_X$ se particionan en dos clases, hay un 
subconjunto infinito $H\subseteq X$ tal que todos los arcos de $K_H$ est\'an en la misma clase.
\end{theorem}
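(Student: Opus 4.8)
The plan is to reduce the two-coloring to repeated applications of the pigeonhole principle for infinite sets, namely the fact that whenever an infinite set is split into two pieces at least one piece is infinite. Write $c\colon [X]^2 \to \{0,1\}$ for the given partition, where $[X]^2$ denotes the set of unordered pairs of distinct elements of $X$, so that $c(\{v,w\})$ records the class of the arc $\{v,w\}$ of $K_X$.

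First I would build, by recursion, a decreasing chain of infinite sets $X = X_0 \supseteq X_1 \supseteq \cdots$ together with a sequence of distinct points $x_n$ and colors $\varepsilon_n \in \{0,1\}$, as follows. Given the infinite set $X_n$, choose any $x_n \in X_n$ and split $X_n \setminus \{x_n\}$ according to the color of the arc joining each point to $x_n$; by pigeonhole one of the two pieces is infinite, and I let $X_{n+1}$ be such a piece and $\varepsilon_n$ the corresponding color. By construction $x_n \in X_n$, $X_{n+1} \subseteq X_n \setminus \{x_n\}$, and every arc from $x_n$ to a point of $X_{n+1}$ has color $\varepsilon_n$. In particular the $x_n$ are pairwise distinct, and whenever $m > n$ we have $x_m \in X_{n+1}$, so $c(\{x_n, x_m\}) = \varepsilon_n$: the color of an arc between two of the chosen points depends only on the smaller index.

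Next I would apply pigeonhole a second time, now to the sequence $(\varepsilon_n)_n$: since it takes only the two values $0$ and $1$, there is a color $\varepsilon$ with $\varepsilon_n = \varepsilon$ for infinitely many $n$. Setting $H = \{x_n : \varepsilon_n = \varepsilon\}$ gives an infinite subset of $X$, and for any two of its elements $x_n, x_m$ with $n < m$ we have $c(\{x_n, x_m\}) = \varepsilon_n = \varepsilon$. Hence every arc of $K_H$ lies in the single class $\varepsilon$, as required.

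The routine parts — the two pigeonhole steps and the verification that $H$ is homogeneous — are immediate; the only delicate point is the recursion itself, which makes infinitely many dependent choices of the points $x_n$ and the infinite subsets $X_{n+1}$. The main thing to be careful about is therefore justifying this construction, a use of dependent choice, and, since $X$ need not be countable, noting that we only ever extract a countably infinite homogeneous set, which is all the statement demands.
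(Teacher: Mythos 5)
Your proof is correct: it is the classical argument for the infinite Ramsey theorem, building a pre-homogeneous sequence $(x_n)$ by repeated pigeonhole (each $x_n$ sees only one color toward the tail $X_{n+1}$), then applying pigeonhole once more to the colors $\varepsilon_n$ to extract the homogeneous set $H$. The paper does not prove this theorem at all — it cites Ramsey's original article — so there is no internal proof to compare against; note, though, that the same ``pigeonhole on the arcs at a fixed vertex'' idea appears in miniature in the paper's finite example showing $6\to(3,3)^2$, and your closing remarks on dependent choice and on the fact that only a countably infinite $H$ is needed when $X$ is uncountable are both accurate and worth making explicit.
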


De manera equivalente, el teorema dice que para todo grafo $\mathcal G$ con conjunto de v\'ertices 
$X$ hay un subconjunto infinito de v\'ertices $H$ tal que $\mathcal G$ contiene o bien al grafo 
completo $K_H$ o bien al grafo vac\'\i o $\bar K_H$. Tambi\'en podemos formular el resultado 
diciendo que si coloreamos los arcos de $K_X$ rojos o azules, hay un $H$ infinito tal que o bien 
$K_H$ es rojo, o bien es azul. (En lo que sigue, todos los coloreos son siempre de los arcos de un 
grafo, en rojo y azul.)

\begin{definition} \label{def:count}
Un conjunto infinito es \emph{contable} si y s\'olo si est\'a en biyecci\'on con (es decir, tiene el 
mismo tamaño que) $\mathbb N$, el conjunto de los n\'umeros naturales. 
\end{definition}

En cierto sentido, el teorema \ref{thm:ramseyinf} es un resultado acerca del orden de los n\'umeros 
naturales. El objetivo de esta nota es demostrar una versi\'on del resultado acerca del orden de 
los enteros.  

\section{El c\'alculo de particiones}

Es conveniente introducir algo de notaci\'on antes de proseguir. 

\begin{definition} \label{def:arrowf}
Dados enteros positivos $N,a,b$, escribimos 
$$ N\to(a,b)^2 $$ 
(lo que, tristemente, se lee `$N$ flecha $a,b$ super 2') para denotar que cualquier 
coloreo de $K_N$ contiene, o bien una copia roja de $K_a$, o bien una copia azul de $K_b$.  

Escribimos $N \nrightarrow (a,b)^2$ para indicar lo contrario.

El estudio de esta relaci\'on flecha y sus variantes es el \emph{c\'alculo de particiones}.
\end{definition}

El super\'\i ndice 2 en la definici\'on \ref{def:arrowf} indica que \'este es un caso particular (el caso 
bidimensional) de  notaci\'on m\'as general; m\'as adelante discutiremos el caso de super\'\i ndice 1 
(el principio de casillas). La notaci\'on se debe a Rado, quien la introdujo en su art\'\i culo 
\cite{MR0058687} conjunto con Erd\H os, considerando m\'as generalmente la versi\'on 
\emph{cardinal}, donde $N,a,b$ pueden ser tamaños de conjuntos infinitos. La notaci\'on es algo 
intimidante cuando uno la encuentra por primera vez, pero es muy conveniente en esta \'area.

\begin{example} \label{ex:fin}
Que ${5 \nrightarrow (3,3)^2}$ significa que hay un coloreo de $K_5$  tal que no hay tri\'angulos con 
los tres arcos del mismo color (el tri\'angulo es \emph{monocrom\'atico}). Por ejemplo, el coloreo 
ilustrado en la figura de la izquierda.

Por otro lado, que $6\to(3,3)^2$ significa que en cualquier coloreo de $K_6$ hay un tri\'angulo 
monocrom\'atico: cualquier v\'ertice $v$ (en la figura de la derecha, el v\'ertice verde) est\'a unido a 
por lo menos otros tres v\'ertices por arcos del mismo color $c$. Si cualquiera de los arcos entre 
estos v\'ertices es tambi\'en de color $c$, estos dos v\'ertices y $v$ forman un tri\'angulo de color 
$c$. En caso contrario, estos tres v\'ertices forman un tri\'angulo del otro color.

\begin{figure}[ht] 
\centering
\begin{tikzpicture}[%
  every node/.style={draw,fill=white,circle,minimum size=3pt},node distance=1cm]
  \node[color=black]  (one) at (-1,0) {};
  \node[color=black]  (two)  at (2*0.309-3,2*0.951) {};
  \node[color=black]  (three) at (2*-0.809-3,2*0.588) {};
  \node[color=black]  (four) at (2*-0.809-3,2*-0.588) {};
  \node[color=black]  (five)  at (2*0.309-3,2*-0.951) {};
  \node[color=green]  (uno) at (5,0){};
  \node[color=gray] (cinco) at (2*0.5+3,2*0.866) {};
  \node[color=black]  (dos)  at (2*-0.5+3,2*0.866) {};
  \node[color=black]  (tres) at (2*-1+3,0) {};
  \node[color=gray] (seis) at (2*-0.5+3,2*-0.866) {};
  \node[color=black]  (cuatro) at (2*0.5+3,2*-0.866) {};
  \draw [red](one) -- (two) -- (three) -- (four) -- (five) -- (one);
  \draw [blue](one) -- (three) -- (five) -- (two) -- (four) -- (one);
  \draw (tres) -- (uno) -- (dos);
  \draw (uno) -- (cuatro);
  \draw [dotted](dos) -- (tres) -- (cuatro) -- (dos);
\end{tikzpicture}
\end{figure}
\end{example}

Se sigue que, para todo $N\ge6$, $N\to(3,3)^2$, y que no podemos reemplazar el 6 por un 
n\'umero menor. Decimos entonces que 6 es el \emph{n\'umero de Ramsey} de 3,3.

\begin{definition}
El n\'umero de Ramsey de $a,b$, $R(a,b)$, es el menor $N$ tal que $N\to(a,b)^2$. 
\end{definition}

Que $N$ existe es precisamente el contenido del teorema \ref{thm:ramseyf}. En esta notaci\'on, el 
ejemplo con el que comenzamos la nota, dice que $10\to(4,3)^2$, de modo que $R(4,3)\le 10$. 
De hecho, $R(4,3)=9$.

En lo que sigue, estudiamos la variante de la relaci\'on flecha donde, en vez de n\'umeros, 
consideramos \'ordenes lineales.

\section{\'Ordenes lineales}

Recu\'erdese que una biyecci\'on entre dos conjuntos $A$ y $B$ es una funci\'on 
${\varphi\!:A\to B}$ que es inyectiva (puntos distintos de $A$ son enviados por $\varphi$ a puntos 
distintos de $B$) y sobreyectiva (todo punto de $B$ es la imagen bajo $\varphi$ de un punto de 
$A$). Intuitivamente, podemos pensar en $A$ y $B$ como dos idiomas y en $\varphi$ como un 
diccionario que traduce de manera perfecta entre los dos.

Si $X$ es contable, por ejemplo $X=\mathbb N$,  todo $H$ como en el teorema \ref{thm:ramseyinf} 
tiene el mismo tamaño que $X$. Denotando por $\aleph_0$ (`alef-0') el tama\~no de los n\'umeros 
naturales, tenemos que $\aleph_0\to(\aleph_0,\aleph_0)^2$. Sin embargo, y esta observaci\'on es la 
que nos lleva a todo lo que sigue, si imponemos una estructura adicional en $X$, por ejemplo, si en 
vez de mirar s\'olo al conjunto $X$, consideramos en cambio un orden lineal $(X,<)$ (simplemente 
diremos un orden), el teorema \ref{thm:ramseyinf} no garantiza que la restricci\'on a $H$ de este 
orden sea no trivial, en el sentido que explicamos siguiendo la observaci\'on \ref{fact:ww*}. 

Los \'ordenes finitos son muy simples: para cada $n$, hay un \'unico orden de tamaño~$n$. Por 
ejemplo, para $n=4$, el orden es
 $$ \bullet\quad\bullet\quad\bullet\quad\bullet $$
(en t\'erminos t\'ecnicos, todos estos \'ordenes son isomorfos). La situaci\'on es completamente 
diferente cuando consideramos conjuntos infinitos: aunque $\mathbb N$, $\mathbb Z$ y 
$\mathbb Q$ son contables, como \'ordenes lineales lucen muy distintos. 

\begin{definition} \label{def:iso}
Decimos que dos \'ordenes $(L_1,<_1)$ y $(L_2,<_2)$ son \emph{isomorfos} o \emph{tienen el 
mismo tipo de orden} o, simplemente, \emph{tienen el mismo tipo} si y s\'olo si hay una biyecci\'on 
$\varphi\!:L_1\to L_2$  entre ellos que transforma un orden en el otro, es decir que para 
cualesquiera $x_1,x_2\in L_1$, tenemos que 
 $$ x_1<_1 x_2 \quad\iff\quad \varphi(x_1)<_2 \varphi(x_2). $$
\end{definition}

En t\'erminos t\'ecnicos, los tipos de orden son las clases de equivalencia de la relaci\'on de 
isomorfismo entre \'ordenes. En lo que sigue, para simplificar notaci\'on, a veces en vez de un 
orden $(L,<)$, escribimos su clase de equivalencia (su tipo). En particular, denotamos por 
$\omega$ (`omega') el tipo de los n\'umeros naturales, y por $\omega^*$ el tipo de los enteros 
negativos. Para $n\in\mathbb N$, es usual denotar por $n$ el tipo de cualquier orden lineal con 
$n$ elementos. Abusando el lenguaje, tambi\'en decimos que, por ejemplo, un orden es isomorfo 
a $\omega$ para indicar que es isomorfo a los n\'umeros naturales. 

En cierto sentido, los \'ordenes de tipo $\omega$ o de tipo $\omega^*$ son los \'ordenes infinitos 
m\'as simples:

Si $(X,<)$ es un orden y $Y\subseteq X$, decimos que la restricci\'on de $<$ a $Y$, es decir, el 
orden $(Y,<\,\upharpoonright Y\times Y)$, es un \emph{suborden} de $(X,<)$.

\begin{fact} \label{fact:ww*}
Todo orden infinito contiene un suborden isomorfo a $\omega$ o a $\omega^*$. 
\end{fact}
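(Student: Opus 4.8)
El plan es reducir el enunciado al hecho cl\'asico de que toda sucesi\'on infinita en un orden lineal admite una subsucesi\'on mon\'otona. Como $(X,<)$ es infinito, primero elijo una sucesi\'on de elementos distintos $x_0,x_1,x_2,\ldots$ en $X$; basta entonces hallar una subsucesi\'on estrictamente creciente o estrictamente decreciente, pues al indexar una subsucesi\'on creciente por $\mathbb N$ obtengo, v\'\i a la definici\'on \ref{def:iso}, un suborden de tipo $\omega$, y an\'alogamente una decreciente da un suborden de tipo $\omega^*$.

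La idea central es el argumento de \emph{cumbres}. Llamo cumbre a un \'\i ndice $n$ tal que $x_m<x_n$ para todo $m>n$; es decir, $x_n$ domina a todos los t\'erminos posteriores. Distingo dos casos seg\'un la cantidad de cumbres. Si hay infinitas cumbres $n_0<n_1<n_2<\cdots$, entonces, como cada $n_k$ es cumbre y $n_{k+1}>n_k$, tengo $x_{n_{k+1}}<x_{n_k}$; luego $x_{n_0}>x_{n_1}>x_{n_2}>\cdots$ es estrictamente decreciente y su suborden tiene tipo $\omega^*$.

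Si en cambio hay s\'olo finitas cumbres, fijo un $N$ mayor que todos sus \'\i ndices, de modo que ning\'un \'\i ndice mayor o igual a $N$ sea cumbre. Construyo entonces, de manera recursiva, una sucesi\'on creciente de \'\i ndices $m_0<m_1<\cdots$: pongo $m_0=N$, y dado $m_k$ (que no es cumbre), la negaci\'on de la condici\'on de cumbre me entrega un \'\i ndice $m_{k+1}>m_k$ con $x_{m_{k+1}}>x_{m_k}$ (la desigualdad es estricta porque los $x_i$ son distintos). Como $m_{k+1}\ge N$ tampoco es cumbre, el proceso contin\'ua indefinidamente y produce $x_{m_0}<x_{m_1}<\cdots$, un suborden de tipo $\omega$.

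El \'unico punto delicado es organizar correctamente la dicotom\'\i a y, en el segundo caso, verificar que la negaci\'on de ``ser cumbre'' garantiza en cada paso un t\'ermino posterior estrictamente mayor; lo dem\'as es rutina. Alternativamente, el resultado se deduce del teorema \ref{thm:ramseyinf}: sobre los $x_i$ coloreo el arco $\{x_i,x_j\}$ con $i<j$ de rojo si $x_i<x_j$ y de azul si $x_j<x_i$; un conjunto infinito homog\'eneo rojo da un suborden de tipo $\omega$, y uno azul, de tipo $\omega^*$.
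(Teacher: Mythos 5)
Tu propuesta es correcta, pero aqu\'\i\ no hay nada con qu\'e compararla dentro del art\'\i culo: la observaci\'on \ref{fact:ww*} se enuncia all\'\i\ sin demostraci\'on, como hecho cl\'asico. Tu argumento de cumbres es la prueba est\'andar de que toda sucesi\'on infinita admite una subsucesi\'on mon\'otona, y funciona literalmente en cualquier orden lineal: la negaci\'on de ``ser cumbre'' entrega un t\'ermino posterior estrictamente mayor gracias a la tricotom\'\i a y a que los $x_i$ son distintos, y la dicotom\'\i a (infinitas cumbres / finitas cumbres) cubre todos los casos, de modo que el argumento est\'a completo. La alternativa que mencionas al final (colorear el arco $\{x_i,x_j\}$ seg\'un si el orden de $X$ coincide o no con el orden de los \'\i ndices, y aplicar el teorema \ref{thm:ramseyinf}) tambi\'en es v\'alida y es m\'as corta dados los recursos que el art\'\i culo ya tiene a mano, aunque invoca un resultado m\'as fuerte de lo necesario; tu prueba de cumbres tiene la ventaja de ser elemental y autocontenida. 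El \'unico punto impl\'\i cito en ambas rutas es el primer paso: extraer de un conjunto infinito una sucesi\'on de elementos distintos requiere una forma d\'ebil del axioma de elecci\'on (sin ella la observaci\'on puede fallar: en ciertos modelos de ZF hay \'ordenes lineales infinitos Dedekind-finitos, sin copia alguna de $\omega$ ni de $\omega^*$); esto es est\'andar y claramente se asume en todo el art\'\i culo, as\'\i\ que no cuenta como una laguna de tu argumento.
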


Se sigue que, aunque $X$ sea un orden muy complicado, el suborden~$H$ que obtenemos en el 
teorema \ref{thm:ramseyinf} puede en cambio ser simple (isomorfo a $\omega$ o a $\omega^*$). 
De hecho, es en general \emph{imposible} obtener una versi\'on del teorema \ref{thm:ramseyinf} en 
la que el orden de $H$ sea m\'as elaborado.

\begin{theorem}[Erd\H os-Rado {\cite[Theorem 19, Corollary]{MR81864}}] \label{thm:ER}
No existe un orden~$X$ tal que siempre que coloreemos $K_X$, o bien 
podemos encontrar una copia roja de $K_\omega$ o bien una copia azul de $K_{\omega^*}$.
\end{theorem}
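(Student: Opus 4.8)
El plan es establecer la afirmación equivalente: para todo orden lineal $(X,<)$ exhibir un coloreo de $K_X$ que no contenga ninguna copia roja de $K_\omega$ ni ninguna copia azul de $K_{\omega^*}$. (Esto niega exactamente la existencia de un $X$ con la propiedad del enunciado.) La idea central es comparar el orden dado $<$ con un buen orden auxiliar de $X$. Fijar\'\i a entonces un buen orden $\prec$ sobre $X$; \'este existe por el teorema del buen orden, y si $X$ es contable basta con transportar el orden usual de $\mathbb N$ mediante una biyecci\'on. La propiedad que usar\'e es la caracter\'\i stica de los buenos \'ordenes: no admiten sucesiones infinitas estrictamente $\prec$-decrecientes $x_0\succ x_1\succ x_2\succ\cdots$.

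Defino el coloreo as\'\i: a cada arco $\{x,y\}$ con $x<y$ lo pinto de azul si los dos \'ordenes coinciden en ese par, es decir si $x\prec y$, y de rojo si discrepan, es decir si $y\prec x$. Para verificar que funciona, sup\'ongase primero que existe una copia roja de $K_\omega$, o sea una sucesi\'on $<$-creciente $x_0<x_1<x_2<\cdots$ con todos sus arcos rojos. Entonces, para cada $i<j$ el arco $\{x_i,x_j\}$ es rojo y $x_i<x_j$, de donde $x_j\prec x_i$; as\'\i\ $x_0\succ x_1\succ x_2\succ\cdots$ resultar\'\i a una sucesi\'on $\prec$-decreciente infinita, lo cual es imposible. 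An\'alogamente, si hubiera una copia azul de $K_{\omega^*}$, esto es una sucesi\'on $<$-decreciente $y_0>y_1>y_2>\cdots$ con todos sus arcos azules, entonces para $i<j$ tenemos $y_j<y_i$ y el arco azul da $y_j\prec y_i$, de modo que otra vez $y_0\succ y_1\succ y_2\succ\cdots$ ser\'\i a $\prec$-decreciente e infinita, contradicci\'on.

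El \'unico paso que requiere algo de astucia ---y el que yo destacar\'\i a como el coraz\'on del argumento--- es la elecci\'on asim\'etrica de los colores: conviene que el rojo detecte las discrepancias y el azul las coincidencias, justamente para que tanto una copia roja \emph{creciente} de $K_\omega$ como una copia azul \emph{decreciente} de $K_{\omega^*}$ desemboquen en la misma configuraci\'on prohibida, una cadena $\prec$-decreciente infinita. Hallado el coloreo adecuado, la verificaci\'on es rutinaria, y la \'unica herramienta no elemental es la existencia de un buen orden de $X$.
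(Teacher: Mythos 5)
Tu demostración es correcta. Conviene aclarar que el artículo no incluye una prueba del teorema \ref{thm:ER}: lo cita directamente de Erd\H os--Rado, así que no hay un argumento interno con el cual contrastar el tuyo. Dicho esto, tu coloreo es exactamente el argumento clásico (el truco de Sierpi\'nski de comparar el orden dado con un buen orden auxiliar $\prec$): el rojo marca los pares donde $<$ y $\prec$ discrepan, el azul donde coinciden, y entonces tanto una copia roja de $K_\omega$ como una copia azul de $K_{\omega^*}$ producirían una cadena infinita $\prec$-decreciente, lo cual es imposible en un buen orden. La verificación de ambos casos está bien hecha, la observación sobre la asimetría de los colores es precisamente el punto clave, y el único supuesto no elemental (el teorema del buen orden, es decir, el axioma de elección) está señalado honestamente; en el caso contable, como notas, ni siquiera hace falta.
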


\section{El c\'alculo de particiones para \'ordenes}

El teorema \ref{thm:ER} impone restricciones significativas a las posibles extensiones del teorema 
\ref{thm:ramseyinf}. Dados \'ordenes lineales $L,L_1,L_2$, escribamos 
$$ L\to(L_1,L_2)^2 $$ 
para denotar que cualquier coloreo de $K_L$ contiene, o bien una copia roja de $K_{L_1}$, o bien 
una copia azul de $K_{L_2}$.  La notaci\'on es una extensi\'on debida a Erd\H os y Rado 
\cite{MR81864}  de la versi\'on discutida antes. 

As\'\i\ como $\omega^*$ denota el orden `reverso' de $\omega$, en general dado un orden 
$L$, $L^*$ denota el orden en el mismo conjunto que $L$ pero donde $x<y$ en $L^*$ si y s\'olo 
si $x>y$ en $L$.

\begin{fact} \label{fact:obs}
Para cualesquiera \'ordenes lineales $L,L',L_1,L_1',L_2$ tenemos:
\begin{enumerate}
\item $L\to(L_1,0)^2$; si $L$ no es vac\'\i o, entonces $L\to(L_1,1)^2$;  $L\to(L,2)^2$.
\item Si $L_1$ y $L_2$ tienen por lo menos dos puntos y $L\to(L_1,L_2)^2$, entonces $L_1,L_2$ 
son sub\'ordenes de $L$.
\item $L\to(L_1,L_2)^2$ si y s\'olo si $L\to(L_2,L_1)^2$.
\item Si $L'$ es un orden que contiene a $L$ como suborden, $L_1'$ es un suborden de $L_1$, y 
$L\to(L_1,L_2)^2$, entonces tambi\'en $L'\to(L_1',L_2)^2$. 
\item Si $L\to (L_1,L_2)^2$, entonces $L^*\to(L_1^*,L_2^*)^2$.
\item Por el teorema \ref{thm:ramseyinf},
$\omega\to(\omega,\omega)^2$ y $\omega^*\to(\omega^*,\omega^*)^2$.
\item Por el teorema \ref{thm:ER}, $L \nrightarrow (\omega,\omega^*)^2$.
\end{enumerate}
\end{fact}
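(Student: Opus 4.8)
El plan es tratar cada inciso por separado, pues todos son consecuencias directas de las definiciones o de los teoremas ya citados; el trabajo es conceptual m\'as que computacional. La observaci\'on clave para (1) es que las copias de $K_0$ y de $K_1$ son degeneradas: $K_0$ no tiene v\'ertices y $K_1$ tiene un solo v\'ertice, as\'\i\ que ninguna de ellas tiene arcos que colorear. Luego una copia azul de $K_0$ (el conjunto vac\'\i o de v\'ertices) siempre est\'a presente, lo que da $L\to(L_1,0)^2$; y si $L$ es no vac\'\i o, cualquier v\'ertice es una copia azul de $K_1$, lo que da $L\to(L_1,1)^2$. Para $L\to(L,2)^2$ usar\'\i a la dicotom\'\i a obvia: en cualquier coloreo, o bien todos los arcos de $K_L$ son rojos (y entonces el propio $L$ es una copia roja de $K_L$), o bien hay al menos un arco azul (que es una copia azul de $K_2$).

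Para el inciso (2) usar\'\i a los coloreos constantes. Si $L_1,L_2$ tienen al menos dos puntos, el coloreo que pinta todos los arcos de rojo no contiene copia azul de $K_{L_2}$, pues $K_{L_2}$ tendr\'\i a al menos un arco, forzosamente rojo; as\'\i\ que $L\to(L_1,L_2)^2$ obliga a que exista una copia roja de $K_{L_1}$, es decir, un suborden de $L$ isomorfo a $L_1$. Sim\'etricamente, el coloreo constante azul exhibe a $L_2$ como suborden de $L$. El inciso (3) sale intercambiando los colores: la biyecci\'on que a cada coloreo le asocia el coloreo con los colores invertidos transforma copias rojas de $K_{L_1}$ en copias azules de $K_{L_1}$ y viceversa, lo que intercambia las dos condiciones.

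Los incisos (4) y (5) son de monoton\'\i a y de inversi\'on del orden. Para (4), dado un coloreo de $K_{L'}$ lo restringir\'\i a a la copia de $L$ dentro de $L'$; por hip\'otesis este coloreo contiene una copia roja de $K_{L_1}$ (dentro de la cual hay una copia roja de $K_{L_1'}$, pues $L_1'$ es suborden de $L_1$) o una copia azul de $K_{L_2}$, y en ambos casos el objeto hallado sigue siendo monocrom\'atico en el coloreo original de $K_{L'}$. Para (5) observar\'\i a que $K_L$ y $K_{L^*}$ tienen el mismo conjunto de arcos, de modo que un coloreo de uno es un coloreo del otro; como un subconjunto isomorfo a $L_1$ en el orden de $L$ es isomorfo a $L_1^*$ en el orden de $L^*$, una copia roja de $K_{L_1}$ se traduce en una copia roja de $K_{L_1^*}$, y an\'alogamente con el azul.

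Finalmente, (6) es una relectura del teorema \ref{thm:ramseyinf}: todo coloreo de $K_\omega$ produce un subconjunto infinito monocrom\'atico $H$, que como suborden de $\omega$ es isomorfo a $\omega$, y seg\'un su color es una copia roja o azul de $K_\omega$; la afirmaci\'on para $\omega^*$ se obtiene aplicando (5). El inciso (7) es exactamente el teorema \ref{thm:ER}. El \'unico punto donde conviene tener cuidado es en los casos degenerados de (1) y en verificar, en (4)--(6), que el tipo de orden del conjunto monocrom\'atico hallado es efectivamente el deseado; fuera de eso, no anticipo dificultad alguna.
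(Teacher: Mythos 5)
Tu propuesta es correcta y coincide en esencia con la lectura del art\'\i culo, el cual enuncia esta observaci\'on sin demostraci\'on por considerar inmediatos precisamente los argumentos que t\'u escribes: copias degeneradas de $K_0$ y $K_1$ en (1), coloreos constantes en (2), intercambio de colores en (3), restricci\'on del coloreo y reversi\'on del orden en (4) y (5), y la relectura de los teoremas \ref{thm:ramseyinf} y \ref{thm:ER} en (6) y (7). No hay nada que corregir.
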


?`Qu\'e m\'as podemos decir para \'ordenes infinitos? 

Si queremos un teorema de la forma $L\to(L_1,L_2)^2$ con $L_1$ infinito, se sigue de la 
observaci\'on \ref{fact:obs} que, o bien $L_2$ es finito, o bien (1) ninguno de $L_1,L_2$ contiene 
una cadena descendente infinita (una copia de $\omega^*$), o (2) ninguno de los dos contiene una 
cadena ascendente infinita (una copia de $\omega$).  El segundo caso es el `reverso' del primero 
(y por tanto su estudio se reduce al estudio del primero, por 5. de la observaci\'on \ref{fact:obs}), y 
el primer caso, el c\'alculo de particiones para ordinales, se ha estudiado extensamente. 

\begin{definition} 
Dados \'ordenes $A$ y $B$, $A+B$ denota el orden que resulta de tener una copia de $A$ seguida
por una de $B$. Por $A\cdot B$ denotamos el orden que resulta de comenzar con $B$ y 
reemplazar cada punto con una copia de $A$.
\end{definition}

\begin{example}
\begin{itemize}
\item $\omega^*+\omega$ es el tipo de los enteros:
 $$ \cdots \quad \bullet\quad\bullet\,\quad\,\bullet\quad\bullet \quad \cdots. $$
\item $\omega+\omega^*$ es el tipo de la restricci\'on del orden de los reales al conjunto
\begin{gather*}
 \left\{1-\frac1{n+1} : n\in\mathbb N\right\} \cup \left\{1+\frac1{n+1} : n\in\mathbb N\right\}: \\ 
\bullet\quad\bullet\quad\bullet\quad\cdots \quad \cdots\quad\bullet\quad\bullet\quad\bullet. 
\end{gather*}
\item $\omega\cdot 2=\omega+\omega$ es el tipo del orden que resulta de tener dos copias de los 
n\'umeros naturales. 
\item $\omega^2=\omega\cdot\omega$ es el tipo de 
 $$ \left\{ n-\frac1{m+1} : m,n\in\mathbb N\right\}. $$
\end{itemize}
\end{example}

Ejemplos de resultados del c\'alculo de particiones de ordinales son la observaci\'on de que 
 $$ \omega\cdot 2+1\to(\omega+1,3)^2 $$
(y, de hecho, \'este es el n\'umero de Ramsey: $R(\omega+1,3)=\omega\cdot 2+1)$, y el siguiente 
teorema.

\begin{theorem}[Specker {\cite[Satz 1]{MR88454}}] \label{thm:specker}
Para todo $n\in\mathbb N$, $\omega^2\to(\omega^2,n)^2$. 
\end{theorem}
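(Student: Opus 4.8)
El plan es proceder por inducción sobre $n$, usando como ingredientes el teorema \ref{thm:ramseyinf} (Ramsey infinito) dentro de cada bloque y la indescomponibilidad aditiva de $\omega^2$. El caso base $n\le 2$ es inmediato por la observación \ref{fact:obs}(1), ya que $\omega^2\to(\omega^2,2)^2$. Supongamos entonces como hipótesis inductiva que $\omega^2\to(\omega^2,n)^2$ y fijemos un coloreo $c$ de $K_{\omega^2}$ sin copias azules de $K_{n+1}$; queremos hallar una copia roja de $K_{\omega^2}$.

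Primero descompongo $\omega^2$ en bloques $B_0,B_1,\dots$, cada uno de tipo $\omega$, con $B_0<B_1<\cdots$. Dentro de cada bloque $B_i$, el coloreo restringido a $[B_i]^2$ tiene, por el teorema \ref{thm:ramseyinf}, un subconjunto homog\'eneo infinito; como no hay $K_{n+1}$ azul, \'este no puede ser azul, as\'\i\ que cada bloque contiene un subconjunto infinito rojo. Adelgazando, puedo suponer que cada $B_i$ es internamente rojo. El problema se reduce entonces a controlar los arcos \emph{entre} bloques.

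La herramienta central ser\'a la siguiente dicotom\'\i a, que a\'\i sla el papel de la hip\'otesis inductiva. Como $\omega^2$ es aditivamente indescomponible (la suma de dos \'ordenes de tipo menor que $\omega^2$ tiene tipo menor que $\omega^2$, de modo que en toda partici\'on de una copia de $\omega^2$ en dos piezas alguna pieza tiene de nuevo tipo $\omega^2$), para todo punto $p$ y toda copia $X$ de $\omega^2$, al partir $X$ en la vecindad roja y la vecindad azul de $p$, al menos una de las dos contiene una copia de $\omega^2$. Si la vecindad azul de $p$ contiene una copia $Y$ de $\omega^2$, entonces $Y$ no puede contener un $K_n$ azul (pues junto con $p$ dar\'\i a un $K_{n+1}$ azul), y por la hip\'otesis inductiva $Y$ contiene un $K_{\omega^2}$ rojo, con lo que habr\'\i amos terminado. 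As\'\i\ que en el caso interesante la vecindad roja de $p$ contiene una copia de $\omega^2$.

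Con esta dicotom\'\i a construyo la copia roja de $K_{\omega^2}$ como una uni\'on $C_0<C_1<\cdots$ de bloques rojos de tipo $\omega$, manteniendo por recursi\'on un \emph{dep\'osito} $W\subseteq\omega^2$ de tipo $\omega^2$ que est\'a conectado en rojo con todo lo ya elegido y queda a la derecha de ello; al recursar dentro de $W$ vuelvo a aplicar la hip\'otesis sobre una copia de $\omega^2$. La parte dif\'\i cil —y el verdadero n\'ucleo del argumento— es precisamente mantener el dep\'osito con tipo de orden $\omega^2$ mientras se fija un bloque infinito $C_k$ entero: cada punto nuevo obliga a pasar a una subcopia de la vecindad roja, y una sucesi\'on decreciente de copias de $\omega^2$ puede tener intersecci\'on vac\'\i a, de modo que una fusi\'on ingenua falla (de hecho, entre dos bloques fijos puede no haber dos subconjuntos infinitos con todos los arcos cruzados rojos, como muestra un semigrafo). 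Espero superar este obst\'aculo explotando de nuevo la cota $n$: la ausencia de un $K_{n+1}$ azul limita severamente las conexiones azules de los puntos del dep\'osito hacia la parte ya construida, lo que —v\'\i a la dicotom\'\i a anterior y una elecci\'on cuidadosa de los puntos en la parte inferior del dep\'osito, preservando un segmento final de tipo $\omega^2$— permite refrescar el dep\'osito sin agotarlo. Una vez garantizado que todos los arcos internos de cada $C_k$ y todos los arcos entre $C_k$ y $C_l$ (para $k<l$, ya que $C_l$ queda dentro del dep\'osito asociado a $C_k$) son rojos, la uni\'on $\bigcup_k C_k$ tiene tipo $\omega\cdot\omega=\omega^2$ y es una copia roja de $K_{\omega^2}$.
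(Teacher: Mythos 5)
Tu esquema arranca bien y, hasta cierto punto, es fiel a cómo se demuestra el teorema: el caso base, el enrojecimiento interno de cada bloque v\'\i a el teorema \ref{thm:ramseyinf}, y la dicotom\'\i a inductiva (si la vecindad azul de alg\'un punto contiene una copia de $\omega^2$, esa vecindad no tiene $K_n$ azul y la hip\'otesis inductiva termina la prueba) son correctos. (Un detalle: tu justificaci\'on de la indescomponibilidad de $\omega^2$ v\'\i a ``suma de tipos menores'' es imprecisa, pues las piezas de una partici\'on se entrelazan y no forman una suma ordinaria; lo limpio es citar el lema de la secci\'on \ref{sec:casillas} con $\omega^2=\omega\cdot\omega$.) El problema es que a partir de ah\'\i\ el texto deja de ser una demostraci\'on: el paso que t\'u mismo llamas ``el verdadero n\'ucleo del argumento'' queda reducido a un ``espero superar este obst\'aculo'', y ese paso \emph{es} el teorema de Specker. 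Peor a\'un, la esperanza de que ``la cota $n$'' limite las conexiones azules entre el dep\'osito y lo ya construido, aplicada puntualmente, es vana: en el semigrafo que t\'u citas (arcos azules entre $a_p\in B_i$ y $b_q\in B_j$ exactamente cuando $p\le q$, rojos los dem\'as) el grafo azul es bipartito, luego no tiene tri\'angulos azules, y sin embargo entre esos dos bloques no existen subconjuntos infinitos con todos los arcos cruzados rojos. Es decir, la hip\'otesis ``no hay $K_{n+1}$ azul'' es perfectamente compatible con que la estrategia de fijar bloques enteros fracase entre pares concretos de bloques: la dicotom\'\i a punto por punto jam\'as puede cerrar la construcci\'on; la prueba tiene que poder \emph{descartar bloques}, no s\'olo puntos.

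Lo que falta es un mecanismo de coherencia en el l\'\i mite, que es donde la finitud de $n$ se usa por segunda vez. En esquema: def\'\i nase (con un ultrafiltro no principal sobre cada bloque, o adelgazando sistem\'aticamente) un \emph{color l\'\i mite} $\ell(x,j)$ de cada punto $x$ contra cada bloque $B_j$. Primero, si alg\'un punto tuviera color l\'\i mite azul hacia infinitos bloques, su vecindad azul contendr\'\i a una copia de $\omega^2$ y tu dicotom\'\i a gana; as\'\i\ que cada punto tiene color l\'\i mite azul s\'olo hacia finitos bloques. Segundo, aplicando el teorema \ref{thm:ramseyinf} (iterado, para cuatro colores) al coloreo derivado sobre pares de bloques dado por los dos colores l\'\i mite en ambas direcciones, todo patr\'on homog\'eneo que involucre azul permite construir punto a punto un $K_{n+1}$ azul (cada punto se elige dentro de un conjunto grande, con finitas restricciones), contradiciendo la hip\'otesis; por tanto existe un conjunto infinito $S$ de bloques con todos los colores l\'\i mite rojos. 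S\'olo entonces funciona tu construcci\'on por rondas, y aun as\'\i\ requiere una fusi\'on que tu propuesta no menciona: al elegir cada punto nuevo hay que encoger no s\'olo el dep\'osito sino tambi\'en la \emph{familia de bloques disponibles}, qued\'andose con aquellos $j$ tales que $\ell(x,j)$ es rojo (que son cofinitos por lo anterior), y exigir del punto nuevo color l\'\i mite rojo s\'olo hacia los bloques ya comprometidos, que son finitos. Sin estos ingredientes —colores l\'\i mite, Ramsey sobre pares de bloques, el $K_{n+1}$ azul en los patrones malos, y la fusi\'on sobre bloques— lo que tienes es un plan honesto con el agujero en el centro. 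N\'otese, por lo dem\'as, que esta nota no demuestra el teorema \ref{thm:specker}: lo invoca como caja negra de \cite{MR88454}, de modo que cualquier prueba completa debe compararse con la de Specker, no con el texto de la nota.
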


La observaci\'on \ref{fact:ww*} dice que todo orden infinito, contiene una cadena ascendente o una 
descendente. Por tanto, si $L_1$ es infinito, cualquier teorema de la forma 
 $$ L\to(L_1,L_2)^2 $$ 
no cubierto por el c\'alculo de particiones para ordinales (o su `reverso') requiere que $L_1$ 
contenga cadenas de ambos tipos y que $L_2$ sea finito. Los dos tipos de \'ordenes m\'as simples 
que contienen cadenas ascendentes y descendentes son $\omega^*+\omega$ y 
$\omega+\omega^*$, y (despu\'es de estudiar ambas opciones) resulta que la m\'as sencilla de 
estas dos posibilidades es cuando $L_1$ es de tipo $\omega^*+\omega$, for ejemplo 
$L_1=\mathbb Z$. Finalmente podemos ser expl\'\i citos: el objetivo de esta nota es el analisis 
exhaustivo de este caso, es decir, queremos establecer todos los teoremas  de la forma 
$L\to(\mathbb Z,n)^2$. 
 
Por la observaci\'on \ref{fact:obs}, $L\to(\mathbb Z,0)^2$ para cualquier $L$,  $L\to(\mathbb Z,1)^2$ 
para cualquier $L$ no vac\'\i o, y $L\to(\mathbb Z,2)^2$ si y s\'olo si $L$ contiene un suborden 
isomorfo a $\mathbb Z$. El caso en que $L_2$ es finito y tiene por lo menos tres elementos es 
m\'as complicado, y el resto de esta nota se dedica a estudiarlo. Lo que debemos hacer es 
caracterizar, para cada $n>2$, todos los \'ordenes lineales $L$ tales que $L\to(\mathbb Z,n)^2$.

\section{Un resultado de casillas} \label{sec:casillas}

Para llevar a cabo este an\'alisis debemos resolver antes una pregunta aparentemente m\'as 
sencilla:

\begin{question} \label{q:Z}
?`Qu\'e tan largo ha de ser un orden $L$ para asegurar que, para cualquier partici\'on $L=A\cup B$ 
de $L$ en dos piezas, al menos una de las piezas contiene una copia de los enteros?
\end{question}

Esta es una pregunta t\'\i pica del estudio del \emph{principio de casillas para ordenes lineales}.

\begin{definition}
Dados \'ordenes $\ell_1,\dots,\ell_n$, escribimos $ L\to(\ell_1,\dots,\ell_n)^1 $ 
para indicar que, no importa como particionemos $L$ en $n$ piezas, $L=A_1\cup\dots\cup A_n$, 
para alg\'un $i$, $A_i$ contiene una copia de $\ell_i$ como suborden.
\end{definition}

El principio de casillas usual es el estudio de esta relaci\'on cuando los \'ordenes son finitos. El caso 
m\'as b\'asico es $n+1\to(\underbrace{2,\dots,2}_n)^1$: si $n+1$ objectos se dividen en $n$ grupos, 
al menos dos objetos terminan en el mismo grupo. 

En general, estudiar problemas de Ramsey requiere considerar problemas de casillas. En el caso 
del ejemplo \ref{ex:fin}, usamos que $5\to(3,3)^1$ para concluir que $6\to(3,3)^2$.

En esta notaci\'on, la pregunta \ref{q:Z} es qu\'e tan largo ha de ser $L$ para asegurar que  
$L\to(\mathbb Z,\mathbb Z)^1$ o, m\'as precisamente: ?`qu\'e orden $\mathcal L$ es tal que 
$ L\to(\mathbb Z,\mathbb Z)^1$ si y s\'olo si $L\to(\mathcal L)^1$ (es decir, si y s\'olo si $L$ contiene 
una copia de $\mathcal L$)? (Veremos que necesitamos una peque\~na modificaci\'on de esta 
versi\'on de la pregunta.)

\begin{definition}
Un orden $L$ es \emph{indescomponible} si y s\'olo si $L\to(L,L)^1$, es decir, si y s\'olo si en 
cualquier partici\'on de $L$ en dos piezas, por lo menos una de las piezas contiene de nuevo una 
copia de $L$.
\end{definition}

Por ejemplo, $\omega,\omega^*$ son indescomponibles, pero $\mathbb Z$ no lo es.

\begin{lemma}
Si $A$ y $B$ son \'ordenes indescomponibles, tambi\'en lo es $A\cdot B$.
\end{lemma}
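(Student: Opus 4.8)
The plan is to argue straight from the definition: I take an arbitrary partition of $A\cdot B$ into two pieces and exhibit a copy of $A\cdot B$ inside one of them. The key is to picture $A\cdot B$ concretely as $B$ many disjoint consecutive copies of $A$. That is, for each $b\in B$ there is an interval $A_b$ which is a copy of $A$, the $A_b$ are pairwise disjoint, and they are arranged in the order type of $B$, so that $A\cdot B=\sum_{b\in B}A_b$ (this matches the definition of $A\cdot B$ as the order obtained from $B$ by replacing each point with a copy of $A$, so the outer index runs over $B$).

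So suppose $A\cdot B=R\cup S$ is a partition, say into red and blue. First I would work inside a single copy $A_b$. Restricting the partition gives $A_b=(A_b\cap R)\cup(A_b\cap S)$, and since $A$ is indecomposable one of these two pieces already contains a copy of $A$. This lets me assign to every $b\in B$ a colour $c(b)\in\{\text{rojo},\text{azul}\}$ such that the part of $A_b$ of colour $c(b)$ contains a copy of $A$, breaking ties arbitrarily when both colours work.

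The colouring $c$ is precisely a partition of $B$ into two pieces $B=B_R\cup B_S$. Now I would invoke the indecomposability of $B$: one of $B_R,B_S$, say $B_R$ after possibly swapping the two colours, contains a copy $B'$ of $B$. For each $b\in B'$ the red part of $A_b$ contains a copy $A'_b$ of $A$ with $A'_b\subseteq R$. Assembling these, the set $H=\bigcup_{b\in B'}A'_b$ lies entirely in $R$, and as a suborder it is $\sum_{b\in B'}A'_b$, i.e. the order $B'$ with each point replaced by a copy of $A$. Since $B'\cong B$ and each $A'_b\cong A$, this is $A\cdot B'\cong A\cdot B$, so $H$ is a red copy of $A\cdot B$, which proves $A\cdot B\to(A\cdot B,A\cdot B)^1$.

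The whole argument is a two-level application of indecomposability, so I do not expect a serious obstacle; the only point that needs care is the final verification that $H$ really has order type $A\cdot B$. This is exactly where the $\sum_{b\in B}$ picture pays off: because the $A_b$ are pairwise disjoint consecutive intervals indexed by $B$, any choice of suborder $A'_b\subseteq A_b$ taken over a suborder $B'\subseteq B$ concatenates, in the induced order, to $\sum_{b\in B'}A'_b=A\cdot B'$, with no interleaving possible between distinct copies. The one genuine modelling choice is to keep the convention that the outer index is $B$, so that the induced partition is a partition of $B$ and it is the indecomposability of $B$ that is used at the outer level, while that of $A$ is used uniformly inside each fibre.
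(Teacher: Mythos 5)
Your proof is correct and follows essentially the same route as the paper: you decompose $A\cdot B$ into fibres $A_b$, use the indecomposability of $A$ inside each fibre to induce a two-piece partition of $B$, apply the indecomposability of $B$ to that induced partition, and assemble the resulting copies of $A$ over a copy of $B$. The only cosmetic difference is that the paper avoids your tie-breaking step by simply defining $E=\{b: A_b\cap C\to(A)^1\}$ and noting that every $b\notin E$ has $A_b\cap D\to(A)^1$, which is equivalent to your colouring $c$.
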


Se sigue que, por ejemplo, $\omega\cdot\omega^*$, $\omega^2=\omega\cdot\omega$, 
$\omega^*\cdot\omega$, y $(\omega^*)^2=\omega^*\cdot\omega^*=(\omega^2)^*$ son 
indescomponibles.

\begin{proof}
Escribamos $A \cdot B=\bigcup_{b\in B}A_b$, donde cada $A_b$ es una copia de $A$, y los puntos 
en $A_b$ vienen despu\'es de los puntos en $A_{b'}$ si y s\'olo si $b'<b$.

Consideremos una partici\'on de $A\cdot B$, digamos $C\cup D$. Esto induce una partici\'on de 
$B$ en dos conjuntos, $E$ y $B\smallsetminus E$, donde $ E=\{b: A_b\cap C\to(A)^1\}$. 

Como $A$ es indescomponible y $(A_b\cap C)\cup(A_b\cap D)=A_b$, si 
$A_b\cap C\nrightarrow (A)^1$, entonces $A_b\cap D\to(A)^1$.

Hay dos casos: O bien (1) $E\to(B)^1$, o (2) no.

En caso (1), $C\to(A\cdot B)^1$.

En caso (2), $B\smallsetminus E\to(B)^1$ porque $B$ es indescomponible. Como indicamos arriba, 
$A_b\cap D\to(A)^1$ para cada $b\in B\smallsetminus E$, y por tanto  
$D\to(A\cdot B)^1$.
\end{proof}

Podemos ahora responder la pregunta \ref{q:Z}.

\begin{theorem} \label{thm:partZ}
Los siguientes son equivalentes para un orden $L$:
\begin{enumerate}
\item
$L\to(\mathbb Z,\mathbb Z)^1$,
\item 
$L\to(\omega,\omega^*)^1$, y
\item 
$L\to(\omega\cdot\omega^*\mbox{ o }\omega^*\cdot\omega)^1$, es decir, $L$ contiene un suborden 
de uno de estos tipos.
\end{enumerate}
\end{theorem}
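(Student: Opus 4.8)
El plan es cerrar el ciclo $(1)\Rightarrow(2)\Rightarrow(3)\Rightarrow(1)$. Conviene reformular dos de los enunciados. Que $L\nrightarrow(\omega,\omega^*)^1$ equivale a que exista una partici\'on $L=A\cup B$ con $A$ sin copias de $\omega$ (equivalentemente, $A$ \emph{inversamente bien ordenado}: toda cadena ascendente es finita) y $B$ sin copias de $\omega^*$ (equivalentemente, $B$ bien ordenado); as\'{\i}, la negaci\'on de (2) dice que $L$ es uni\'on de un conjunto bien ordenado y uno inversamente bien ordenado. Y (3) dice que $L$ contiene a $\omega\cdot\omega^*$ o a $\omega^*\cdot\omega$. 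La implicaci\'on $(1)\Rightarrow(2)$ es inmediata: como $\mathbb Z=\omega^*+\omega$ contiene copias de $\omega$ y de $\omega^*$, al aplicar (1) a una partici\'on $A\cup B$, una copia de $\mathbb Z$ en $A$ produce una copia de $\omega$ en $A$, y una copia de $\mathbb Z$ en $B$ produce una de $\omega^*$ en $B$.

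Para $(3)\Rightarrow(1)$ basta tratar $\omega\cdot\omega^*\to(\mathbb Z,\mathbb Z)^1$: los an\'alogos para el super\'{\i}ndice $1$ de la observaci\'on \ref{fact:obs}(4) (restringir una partici\'on a un suborden) y de (5) (reversos, con $(\omega\cdot\omega^*)^*=\omega^*\cdot\omega$ y $\mathbb Z^*=\mathbb Z$) reducen el problema a este caso. Escribo $\omega\cdot\omega^*=\bigcup_b A_b$ con bloques $A_b\cong\omega$ indexados por $\omega^*$. Dada una partici\'on $C\cup D$, cada bloque es indescomponible y aporta una copia de $\omega$ a $C$ o a $D$; como el \'{\i}ndice $\omega^*$ tambi\'en es indescomponible, hay $\omega^*$ muchos bloques que aportan su copia de $\omega$ al mismo lado, digamos $C$. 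Eligiendo un punto de $C$ en cada uno de esos bloques salvo el \'ultimo, obtengo una cadena descendente de tipo $\omega^*$ situada por debajo de la copia de $\omega$ que $C$ contiene en ese \'ultimo bloque; su concatenaci\'on tiene tipo $\omega^*+\omega=\mathbb Z$ y vive en $C$.

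Queda $(2)\Rightarrow(3)$, la parte sustancial, que abordo por contrarrec\'{\i}proca: si $L$ no contiene a $\omega\cdot\omega^*$ ni a $\omega^*\cdot\omega$, construir\'e $L=A\cup B$ con $A$ inversamente bien ordenado y $B$ bien ordenado. Como todo orden no disperso contiene una copia de $\mathbb Q$, y $\mathbb Q$ contiene copias de ambos productos, la hip\'otesis obliga a que $L$ sea disperso, y proceder\'e por inducci\'on en su rango de Hausdorff, recordando que todo orden disperso se construye a partir de puntos mediante sumas indexadas por un ordinal o por el reverso de un ordinal de piezas de rango menor (cada una de las cuales, siendo un suborden, tampoco contiene los productos). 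Si $L=\sum_{i\in\gamma}L_i$ es una tal suma bien ordenada, la ausencia de $\omega^*\cdot\omega$ garantiza que s\'olo un n\'umero finito de piezas $L_i$ contiene una cadena descendente infinita (si fueran infinitas, $\omega$ muchas de ellas con una copia de $\omega^*$ en cada una dar\'{\i}an $\omega^*\cdot\omega$); las dem\'as piezas son bien ordenadas y pasan enteras a $B$, cada pieza excepcional se descompone por hip\'otesis inductiva, y la uni\'on, finita, de sus partes inversamente bien ordenadas es de nuevo inversamente bien ordenada. El caso de una suma indexada por el reverso de un ordinal es sim\'etrico, usando ahora la ausencia de $\omega\cdot\omega^*$.

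El obst\'aculo principal es esta \'ultima construcci\'on. Lo delicado es que al acumular las piezas de un mismo lado puede reaparecer el patr\'on prohibido: una suma bien ordenada de conjuntos inversamente bien ordenados no vac\'{\i}os es, t\'{\i}picamente, una copia de $\omega^*\cdot\omega$. Es justamente la hip\'otesis de no contener los productos lo que acota a finitas las piezas problem\'aticas de cada lado y permite que la inducci\'on cierre. Conviene advertir que las recetas ingenuas por derivaci\'on —por ejemplo, quitar el mayor segmento inicial bien ordenado y el mayor segmento final inversamente bien ordenado, e iterar— fallan: sobre $\mathbb Z$ ambos segmentos son vac\'{\i}os y el proceso se estanca sin descomponer nada, aunque $\mathbb Z$ se parta trivialmente como $\omega^*\cup\omega$. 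Eso es lo que hace indispensable apoyarse en la estructura global de los \'ordenes dispersos.
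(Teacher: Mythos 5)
Tu propuesta es correcta, pero la direcci\'on sustancial $(2)\Rightarrow(3)$ sigue una ruta genuinamente distinta a la del art\'\i culo. Las implicaciones $(1)\Rightarrow(2)$ y $(3)\Rightarrow(1)$ coinciden en esencia con las del texto (en la segunda construyes la copia de $\mathbb Z$ a mano usando s\'olo la indescomponibilidad de $\omega$ y de $\omega^*$, mientras que el art\'\i culo invoca el lema de que $\omega\cdot\omega^*$ y $\omega^*\cdot\omega$ son indescomponibles; la diferencia es cosm\'etica). Para $(2)\Rightarrow(3)$ el art\'\i culo da un argumento directo y autocontenido: define la clase $\mathcal C$ de los \'ordenes que satisfacen (2), prueba que si $L\in\mathcal C$ y $L=A\cup B$ entonces $A$ o $B$ est\'a en $\mathcal C$, y mediante un an\'alisis de segmentos finales construye dentro de $L$ una copia de $\omega^*\cdot\omega$ (iterando ``copia de $\omega^*$ seguida de al menos un punto'') o una de $\omega\cdot\omega^*$. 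T\'u, en cambio, pruebas la contrarrec\'\i proca apoy\'andote en dos resultados externos que el art\'\i culo no necesita: la universalidad de $\mathbb Q$ para \'ordenes contables (que reduce al caso disperso) y el teorema de clasificaci\'on de Hausdorff de los \'ordenes dispersos, con inducci\'on sobre el rango; n\'otese que necesitas la versi\'on general de ese teorema, no s\'olo la contable, pues $L$ es arbitrario. Tu paso clave ---en una suma bien ordenada s\'olo un n\'umero finito de piezas puede contener $\omega^*$, so pena de crear $\omega^*\cdot\omega$--- es correcto y la inducci\'on cierra; s\'olo conviene hacer expl\'\i cito que el lado bien ordenado resultante, al ser una suma bien ordenada de piezas bien ordenadas, es en efecto bien ordenado (el hecho dual del argumento de casillas que usas para la uni\'on finita del otro lado). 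En balance: tu enfoque sit\'ua el teorema dentro de la teor\'\i a estructural est\'andar de los \'ordenes dispersos y hace evidente de d\'onde provienen los dos productos, mientras que el del art\'\i culo es m\'as elemental y autocontenido, pues no requiere ni el rango de Hausdorff ni la universalidad de $\mathbb Q$, y extrae el encaje directamente de la hip\'otesis de partici\'on.
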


De modo que hay precisamente dos respuestas incomparables a la pregunta de qu\'e tan largo ha 
de ser $L$.

\begin{proof}
($1.\Rightarrow 2.$) Esto es obvio, porque $\mathbb Z$ contiene sub\'ordenes de tipo $\omega$ y 
de tipo $\omega^*$.

\vspace{1mm}
($2.\Rightarrow 3.$) Sea $\mathcal C$ la clase de \'ordenes $L$ tales que 
$L\to(\omega,\omega^*)^1$. La primera observaci\'on es que $\mathcal C$ misma es, en cierto 
sentido, indescomponible:

\begin{fact}
Si $L\in\mathcal C$ y $L=A\cup B$, entonces $A$ o $B$ est\'a en $\mathcal C$.
\end{fact}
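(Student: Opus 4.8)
The plan is to argue by contradiction. Suppose that neither $A$ nor $B$ lies in $\mathcal C$. Unwinding the definition, $A\nrightarrow(\omega,\omega^*)^1$ means there is a partition $A=A_1\cup A_2$ for which $A_1$ contains no copy of $\omega$ and $A_2$ contains no copy of $\omega^*$; likewise $B\nrightarrow(\omega,\omega^*)^1$ gives a partition $B=B_1\cup B_2$ with $B_1$ free of copies of $\omega$ and $B_2$ free of copies of $\omega^*$. I would then splice these together into a single partition of $L$, namely
$$ L=(A_1\cup B_1)\cup(A_2\cup B_2), $$
which is indeed a covering since $A_1\cup A_2\cup B_1\cup B_2=A\cup B=L$, and I would use the hypothesis $L\in\mathcal C$ to derive a contradiction.

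Applying $L\to(\omega,\omega^*)^1$ to this partition, either $A_1\cup B_1$ contains a copy of $\omega$, or $A_2\cup B_2$ contains a copy of $\omega^*$. The crux is to rule out both alternatives, and for this the key tool is the indecomposability of $\omega$ and of $\omega^*$ recorded earlier in the excerpt. Concretely, suppose $A_1\cup B_1$ contains a copy $W$ of $\omega$. Then $W=(W\cap A_1)\cup(W\cap B_1)$ is a partition of a set of type $\omega$, so by $\omega\to(\omega,\omega)^1$ one of $W\cap A_1$, $W\cap B_1$ already contains a copy of $\omega$; hence $A_1$ or $B_1$ contains one, contradicting how they were chosen. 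The second alternative is dispatched symmetrically: a copy of $\omega^*$ inside $A_2\cup B_2$, partitioned by $A_2$ and $B_2$, yields by $\omega^*\to(\omega^*,\omega^*)^1$ a copy of $\omega^*$ in $A_2$ or in $B_2$, again a contradiction.

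Since both alternatives are impossible, the assumption that neither $A$ nor $B$ is in $\mathcal C$ cannot hold, and the claim follows. I do not expect a genuine obstacle here; the single point requiring care is the reduction of a copy of $\omega$ (respectively $\omega^*$) living in a union down to one of the two pieces, and this is handled cleanly by the pigeonhole fact packaged as the indecomposability of $\omega$ and $\omega^*$. I would also note that the argument uses nothing about $\omega,\omega^*$ beyond their indecomposability, so it applies verbatim to any pair of indecomposable orders.
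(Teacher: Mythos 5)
Your proof is correct and follows essentially the same route as the paper: both splice the two witnessing partitions into the single partition $L=(A_1\cup B_1)\cup(A_2\cup B_2)$ and observe that it witnesses $L\notin\mathcal C$. The only difference is cosmetic: you argue by contradiction rather than contrapositive, and you spell out explicitly the pigeonhole step (via $\omega\to(\omega,\omega)^1$ and $\omega^*\to(\omega^*,\omega^*)^1$, i.e.\ the indecomposability of $\omega$ and $\omega^*$) that the paper leaves implicit when it asserts $C\cup E\nrightarrow(\omega)^1$ and $D\cup F\nrightarrow(\omega^*)^1$.
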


\begin{proof}
Supongamos que $L=A\cup B$ es un orden, que  
$A\nrightarrow (\omega,\omega^*)^1$, como atestigua la partici\'on $A=C\cup D$, y que  
$B\nrightarrow (\omega,\omega^*)^1$, como atestigua $B=E\cup F$.

Consideremos la partici\'on de $L$ como $(C\cup E)\cup(D\cup F)$, y notemos que  
 $$ C\cup E\nrightarrow (\omega)^1 \quad\mbox{ y }\quad D\cup F\nrightarrow(\omega^*)^1, $$ 
de modo que esta partici\'on atestigua que $L\notin\mathcal C$.
\end{proof}

Sea $L\in\mathcal C$. Hay dos casos: O bien (1) $L$ contiene un orden $L'$ tal que $L'$ y todos 
sus segmentos finales no vac\'\i os est\'an en $\mathcal C$, o (2) no.

En caso (1), como el orden $L'$ est\'a en $\mathcal C$, \'el contiene una copia de $\omega^*$, 
seguida de por lo menos un punto. Por la propiedad de $L'$, el segmento final de $L'$ que sigue a 
esta copia de $\omega^*$ de nuevo contiene una copia de $\omega^*$ y por lo menos otro punto, y 
podemos repetir. 

Se sigue que $L'$, y por tanto tambi\'en $L$, contiene una copia de $\omega^*\cdot \omega$.

En caso (2), $L$ mismo contiene un segmento final no vac\'\i o que no est\'a en $\mathcal C$. Sea  
$T$ la uni\'on de todos estos segmentos finales.

\begin{fact}
$T\in \mathcal C$.
\end{fact}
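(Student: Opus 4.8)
The plan is to argue by contradiction: suppose $T\notin\mathcal C$ and deduce that $L$ after all falls under case (1), contradicting the assumption that we are in case (2). Write $U=L\setminus T$. Since $T$ is a union of final segments it is itself a final segment, so $U$ is an initial segment of $L$. The first thing I would record is a clean description of $T$. Because $\mathcal C$ is closed upward under passing to larger orders --- if $M\in\mathcal C$ and $M$ is a suborder of $M'$ then $M'\in\mathcal C$, the superscript-$1$ analogue of part (4) of Observaci\'on \ref{fact:obs} --- a point $x$ lies in $T$ exactly when its principal final segment $[x,\to)=\{y\in L:y\ge x\}$ is not in $\mathcal C$: if some final segment $S\ni x$ fails to be in $\mathcal C$ then $[x,\to)\subseteq S$ forces $[x,\to)\notin\mathcal C$, and conversely $[x,\to)$ is itself such a segment. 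Hence $U=\{x\in L:[x,\to)\in\mathcal C\}$. Now I would apply the indescomponibilidad of $\mathcal C$ (the observaci\'on just proved, that $L=A\cup B$ with $L\in\mathcal C$ forces $A\in\mathcal C$ or $B\in\mathcal C$) to the splitting $L=U\cup T$: since $L\in\mathcal C$ and we are assuming $T\notin\mathcal C$, we get $U\in\mathcal C$; in particular $U\neq\emptyset$.

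Next I would show that $U$ and all its non-empty final segments lie in $\mathcal C$, so that $U$ itself serves as the order $L'$ demanded by case (1). Fix $x\in U$. Then $[x,\to)\in\mathcal C$, and since $T$ is a final segment lying entirely above $x$ one may split this principal final segment as $[x,\to)=([x,\to)\cap U)\cup T$. Applying the indescomponibilidad observaci\'on once more, and using $T\notin\mathcal C$, we conclude $[x,\to)\cap U\in\mathcal C$; but $[x,\to)\cap U$ is precisely the principal final segment of $U$ determined by $x$. Finally, an arbitrary non-empty final segment $S$ of $U$ contains $[x,\to)\cap U$ for any $x\in S$, so the upward closure of $\mathcal C$ gives $S\in\mathcal C$ as well. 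Thus $U\in\mathcal C$ and every non-empty final segment of $U$ is in $\mathcal C$, placing $L$ in case (1) --- the desired contradiction. Therefore $T\in\mathcal C$.

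The routine ingredients are the two monotonicity-type facts about $\mathcal C$ (upward closure under suborders, and the indescomponibilidad observaci\'on), which I would invoke freely. The step that requires the most care is the passage from principal to arbitrary final segments of $U$: one must check that having every $[x,\to)\cap U$ in $\mathcal C$ really forces every non-empty final segment of $U$ into $\mathcal C$, which is exactly where upward closure is essential and where the final segments with no least element are handled. The other point to get right is the bookkeeping for the splitting $[x,\to)=([x,\to)\cap U)\cup T$ --- in particular that $T$ sits entirely above each $x\in U$, so that this is a genuine partition into two pieces to which the indescomponibilidad observaci\'on applies.
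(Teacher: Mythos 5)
Your proof is correct and follows essentially the same route as the paper's: assuming $T\notin\mathcal C$, you show that $L'=L\smallsetminus T$ together with all of its non-empty final segments lies in $\mathcal C$ (via the indescomponibilidad observation applied to partitions that split off $T$), so that $L$ falls under case (1), a contradiction. The paper's own proof is a compressed version of exactly this argument; your detour through the principal final segments $[x,\to)\cap U$ and the upward closure of $\mathcal C$ just makes explicit the verification the paper leaves to the reader.
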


\begin{proof}
De lo contrario, $T\notin\mathcal C$ (en particular, $T\ne L$) y se sigue que todos los segmentos 
finales no vac\'\i os de $L\smallsetminus T$ est\'an en $\mathcal C$, lo que contradice que no 
estamos en el caso (1).
\end{proof}

Tenemos que $T\in\mathcal C$ pero ninguno de sus segmentos finales propios lo est\'a, de modo 
que todos sus segmentos iniciales no vac\'\i os est\'an en $\mathcal C$.

Como el orden $T$ est\'a en $\mathcal C$, \'el contiene una copia de $\omega$ precedida por al 
menos un punto. Por la propiedad de $T$, el segmento inicial que precede esta copia de $\omega$ 
de nuevo contiene una copia de $\omega$ y por lo menos un punto antes, y podemos repetir.

Se sigue que $T$, y por tanto $L$, contiene una copia de $\omega\cdot \omega^*$.

\vspace{1mm}
($3.\Rightarrow 1.$) Es claro que si $\varphi\in\{\omega\cdot\omega^*,\omega^*\cdot\omega\}$, 
entonces $\varphi\to(\mathbb Z)^1$. El resultado se sigue, porque $\omega\cdot\omega^*$ y 
$\omega^*\cdot\omega$ son indescomponibles. De hecho, esto muestra que 1. es equivalente a 
$L\to(\underbrace{\mathbb Z,\dots,\mathbb Z}_n)^1$ para cualquier $n\ge 2$.
\end{proof}

\section{Un teorema de Ramsey para los enteros} \label{section:teorema}

Podemos ahora establecer el resultado prometido.

\begin{theorem} \label{thm:Z2}
Los siguientes son equivalentes para un orden $L$ y cualquier $n\ge3$:
\begin{enumerate}
\item 
$L\to(\omega\cdot \omega^*\mbox{ o }\omega^*\cdot\omega)^1$,
\item
$L\to(\mathbb Z,n)^2$.
\end{enumerate}
\end{theorem}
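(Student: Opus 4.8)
The plan is to prove the two implications separately, exploiting the partition-calculus machinery of Theorem~\ref{thm:partZ}. The equivalence of condition~(1) here with the one-dimensional arrow relation $L\to(\mathbb Z,\mathbb Z)^1$ is exactly Theorem~\ref{thm:partZ}, so I will freely use that $L$ contains a copy of $\omega\cdot\omega^*$ or of $\omega^*\cdot\omega$ as a reformulation of~(1). Since $n\ge 3$ is arbitrary, by the monotonicity in observation~\ref{fact:obs}(4) it suffices to prove $(2)\Rightarrow(1)$ for a \emph{single} value, say $n=3$, and $(1)\Rightarrow(2)$ for \emph{all} $n$; the larger $n$ only make the left-hand side harder, so the cleanest route is to prove $(1)\Rightarrow(2)$ in the form $L\to(\mathbb Z,n)^2$ for every $n\ge 3$ and $(2)\Rightarrow(1)$ from the weakest hypothesis $L\to(\mathbb Z,3)^2$.

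\medskip
\noindent\textbf{The direction $(2)\Rightarrow(1)$ (contrapositive).}
I would argue by contraposition: assuming $L$ fails~(1), i.e.\ $L\nrightarrow(\mathbb Z,\mathbb Z)^1$ by Theorem~\ref{thm:partZ}, I exhibit a red/blue coloring of $K_L$ with no red $\mathbb Z$ and no blue triangle ($n=3$). Fix a partition $L=A\cup B$ witnessing the failure, so \emph{neither} $A$ nor $B$ contains a copy of $\mathbb Z$. Color an edge $\{x,y\}$ \emph{red} when $x,y$ lie in the same piece ($A$ or $B$) and \emph{blue} when they lie in different pieces. A red copy of $K_{\mathbb Z}$ would be a $\mathbb Z$-suborder entirely inside $A$ or entirely inside $B$, contradicting the choice of the partition. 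A blue triangle would be three vertices, pairwise in different pieces — impossible with only two pieces, by the pigeonhole principle two of the three must coincide in a piece. Hence this coloring witnesses $L\nrightarrow(\mathbb Z,3)^2$, which by observation~\ref{fact:obs}(4) gives $L\nrightarrow(\mathbb Z,n)^2$ for every $n\ge 3$.

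\medskip
\noindent\textbf{The direction $(1)\Rightarrow(2)$.}
Here I assume $L$ contains a copy $M$ of $\omega\cdot\omega^*$ or of $\omega^*\cdot\omega$; by observation~\ref{fact:obs}(4) it is enough to find a red $\mathbb Z$ or a blue $K_n$ inside $M$, so I may assume $L=\omega\cdot\omega^*$ (the other type is its reverse, handled by observation~\ref{fact:obs}(5)). Write $L=\bigcup_{k\in\omega^*} B_k$ as an $\omega^*$-indexed descending sequence of blocks, each block $B_k$ a copy of $\omega$. Fix a coloring of $K_L$ with no blue $K_n$; I must produce a red copy of $\mathbb Z$. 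The idea is a two-stage Ramsey argument. \emph{Within each block} $B_k\cong\omega$, I invoke $\omega\to(\omega,\omega)^2$ (observation~\ref{fact:obs}(6)) to extract an infinite monochromatic subset $H_k\subseteq B_k$; since there is no blue $K_n$ with $n\ge 3$, if $H_k$ were blue it would already contain a blue $K_n$ (as $H_k$ is infinite, hence of size $\ge n$), which is forbidden — so \emph{every} $H_k$ is red, giving an ascending red copy of $\omega$ in each block. \emph{Across the blocks}, I color each pair $\{j,k\}$ of block-indices by a summary color recording how the blocks interact, then thin out the index set $\omega^*$ using $\omega^*\to(\omega^*,\omega^*)^2$ to find an infinite descending family of blocks that interact monochromatically; again the blue alternative is killed by the absence of a blue $K_n$. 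Splicing a red $\omega^*$ of block-representatives to a red $\omega$ inside one surviving block yields a red $\omega^*+\omega=\mathbb Z$.

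\medskip
\noindent\textbf{Where the difficulty lies.}
The delicate point — and the step I expect to be the main obstacle — is the across-blocks argument, specifically \emph{which} summary color to attach to a pair of block-indices and \emph{which representative} to pull from each block so that the inter-block edges among the chosen representatives all come out red. One cannot simply declare a pair of blocks ``red'' when a single pair of points is red; the blue-$K_n$ constraint must be deployed carefully so that, inside each block, the set of points joined in blue to a fixed external vertex stays bounded (of size $<n$). I anticipate the correct device is to use the bound $n$ to show that in each block only finitely many points can be ``blue-heavy'' toward a chosen backbone, so after finitely many deletions every block offers a point red-connected to the backbone. This finite-deletion bookkeeping, iterated coherently down the $\omega^*$ of blocks, is the technical heart of the proof; the finiteness of $n$ is exactly what makes it succeed and is the reason the theorem fails with $\mathbb Z$ in place of $n$, consistent with observation~\ref{fact:obs}(7).
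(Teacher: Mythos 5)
Your direction $(2)\Rightarrow(1)$ is correct and coincides with the paper's proof: the same two-piece coloring (red inside $A$ and inside $B$, blue across), with Theorem~\ref{thm:partZ} translating the failure of (1) into the witnessing partition $L=A\cup B$, and the pigeonhole observation ruling out blue triangles. Your reductions via the observaci\'on (items (3), (4), (5)) are also fine.

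The gap is in $(1)\Rightarrow(2)$, and it is not a small one. The paper does not prove this implication from scratch: it quotes Specker's theorem (Theorem~\ref{thm:specker}), $\omega^2\to(\omega^2,n)^2$, and transfers it to $\omega^*\cdot\omega$ by identifying the $k$-th copy of $\omega$ in $\omega^2$ with the $k$-th copy of $\omega^*$ in $\omega^*\cdot\omega$: blue $K_n$'s transfer verbatim, and a red set of type $\omega^2$ must meet infinitely many blocks in infinite sets, so under the identification it yields a red copy of $\omega^*\cdot\omega$, which contains $\mathbb Z$; the case $\omega\cdot\omega^*$ follows by reversal. You instead attempt to prove the Specker-type content directly, and your ``across the blocks'' stage is exactly where the whole difficulty of Specker's theorem lives; what you write there is a description of the obstacle, not an argument. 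Moreover, the specific device you propose is false: if a block $H_k$ is pairwise red and $v$ is an external vertex, then $v$ can be joined in blue to \emph{every} point of $H_k$ without creating any blue $K_3$, let alone $K_n$, because a blue clique can use at most one vertex of a red clique. So the absence of blue $K_n$ puts no bound whatsoever on the blue neighborhood of a single vertex inside a red block, and the ``finite-deletion bookkeeping'' you anticipate has nothing to run on.

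The deeper reason the plan as stated cannot work is the failure of infinite bipartite Ramsey, which your ``summary color'' step implicitly assumes away. Consider a block $H_j=\{h_0,h_1,\dots\}$ (pairwise red) and representatives $v_1,v_2,\dots$, one from each lower block, pairwise red, with $\{v_i,h_m\}$ blue if and only if $m\le i$, and all remaining edges red. Every blue clique here has size at most $2$, so there is no blue triangle; yet there are no infinite $W\subseteq H_j$ and infinite subfamily of the $v_i$ with all cross edges red. Hence ``splicing a red $\omega^*$ of block-representatives to a red $\omega$ inside one surviving block'' can be impossible for \emph{any} pre-chosen representatives and pre-chosen block, even though the coloring is triangle-free in blue; a correct proof must choose the target block and the representatives jointly, in a way that depends globally on the coloring. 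That joint construction is essentially a proof of Specker's theorem. The efficient repair of your write-up is to do what the paper does: keep your reduction to $\omega\cdot\omega^*$ and $\omega^*\cdot\omega$, then cite Theorem~\ref{thm:specker} and carry out the block-by-block identification with $\omega^2$ described above.
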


Es decir, que la generalizaci\'on del n\'umero de Ramsey (el \emph{orden} de Ramsey) 
$R(\mathbb Z,n)$ en vez de ser un orden es un conjunto finito de dos \'ordenes incomparables.

\begin{proof}
($1.\Rightarrow 2.$) Por el teorema \ref{thm:specker}, $\omega^2\to(\omega^2,n)^2$ para todo 
$n\ge2$. Se sigue que $\omega^*\cdot \omega\to(\omega^*\cdot\omega,n)^2$ para todo 
$n\ge2$, simplemente identificando la $n$-\'esima copia de $-\mathbb N$ en 
$\omega^*\cdot\omega$ con la $n$-\'esima copia de $\mathbb N$ en $\omega^2$ de la manera 
natural. Pero $\omega^*\cdot\omega$ contiene una copia de $\mathbb Z$.  El otro caso es 
an\'alogo.

\vspace{1mm}
($2.\Rightarrow 1.$) Usamos el teorema \ref{thm:partZ}. Supongamos que 
$L\nrightarrow(\mathbb Z,\mathbb Z)^1$, como atestigua la partici\'on $L=A\cup B$. Coloreemos 
$K_L$ de modo que cualquier arco entre dos puntos de $A$ o entre dos puntos de $B$ es 
rojo, y entre un punto de $A$ y uno de $B$ es azul. Este coloreo atestigua que 
$L\nrightarrow(\mathbb Z,3)^2$: el coloreo no admite tri\'angulos azules, porque de cualesquiera 
tres puntos, dos pertenecen a la misma parte de la partici\'on, y el arco entre ellos es por tanto rojo, 
y para cualquier conjunto $H\subseteq L$ con $K_H$ rojo, todos los v\'ertices de $H$ pertenecen 
a la misma parte de la partici\'on, y por tanto $H$ no contiene una copia de $\mathbb Z$.   
\end{proof}

\section{Coda} \label{section:coda}

Aunque el teorema \ref{thm:Z2} y nuestras observaciones anteriores cubren todos los posibles 
resultados de la forma $L\to(\mathbb Z,L_2)^2$, hay una pregunta adicional: ?`podemos encontrar 
un orden $L$ tal que cualquier coloreo de  $K_L$  o bien contiene una copia 
roja de $K_{\mathbb Z}$, o hay un conjunto infinito $X$ tal que $K_X$ es azul? 

La existencia de tal orden no est\'a prohibida por el teorema \ref{thm:ER}, porque para algunos 
coloreos podr\'\i a ser que $X$ sea de tipo $\omega$ y para otros de tipo $\omega^*$. 

Resulta que, de hecho, una caracterizaci\'on de tales \'ordenes, por lo menos en el caso contable, 
ya aparece en la literatura. La relaci\'on correspondiente se escribe en s\'\i mbolos 
$L\to(\mathbb Z,\aleph_0)^2$. Por completitud menciono los resultados relevantes.

\begin{theorem}[Erd\H os y Rado {\cite[Theorem 4]{MR65615}}]
El conjunto $\mathbb Q$ de los racionales satisface $\mathbb Q\to(\mathbb Q,\aleph_0)^2$.
\end{theorem}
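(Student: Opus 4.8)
El plan es el siguiente. Fijemos un coloreo de $K_{\mathbb Q}$ y supongamos que no existe ning\'un conjunto infinito $X$ con $K_X$ azul; bajo esta hip\'otesis construir\'e un suborden $H\cong\mathbb Q$ con $K_H$ rojo (una copia roja de $K_{\mathbb Q}$). La herramienta central es la \emph{indivisibilidad} de $\mathbb Q$: para todo $k$, $\mathbb Q\to(\underbrace{\mathbb Q,\dots,\mathbb Q}_k)^1$, es decir, al partir $\mathbb Q$ en $k$ piezas alguna pieza contiene una copia de $\mathbb Q$. \'Este es un hecho cl\'asico que puede demostrarse con un argumento de \'arbol an\'alogo al de la demostraci\'on del teorema \ref{thm:partZ}. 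Recu\'erdese tambi\'en que, como $\mathbb Q$ es denso y sin extremos, al quitarle un conjunto finito $F$ cada componente convexa del complemento (incluidos los dos intervalos no acotados) es de nuevo una copia de $\mathbb Q$; llamar\'e a estas componentes los \emph{huecos} de $F$.

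Construir\'e $H$ por ida y vuelta, manteniendo una copia actual $Q_m\cong\mathbb Q$ (con $Q_0=\mathbb Q$), un conjunto finito $F\subseteq Q_m$ con $K_F$ rojo, y puntos \emph{extra\'\i dos} $y_0,\dots,y_{m-1}$, y enumerando los huecos para garantizar densidad y ausencia de extremos. En cada paso deseo hallar, en un hueco $I\cong\mathbb Q$ de $F$, un punto $x\in I$ tal que $K_{F\cup\{x\}}$ sea rojo. Si tal $x$ existe, lo a\~nado a $F$ y contin\'uo. Si no existe, entonces para todo $x\in I$ alg\'un arco $\{x,f\}$ con $f\in F$ es azul; esto define un coloreo de $I\cong\mathbb Q$ en $|F|$ clases, y por la indivisibilidad de $\mathbb Q$ hay un elemento $f\in F$ y una copia $Q'\cong\mathbb Q$ con $Q'\subseteq I$ tal que todo punto de $Q'$ est\'a unido a $f$ por un arco azul. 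En ese caso \emph{extraigo} $y_m:=f$, pongo $Q_{m+1}:=Q'$, y reinicio la construcci\'on roja (con conjunto vac\'\i o) dentro de $Q_{m+1}$.

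El resultado se sigue de una dicotom\'\i a. Si en total hay s\'olo finitas extracciones, entonces tras la \'ultima la construcci\'on por ida y vuelta corre indefinidamente sin atascarse dentro de alg\'un $Q_{M+1}$; tras $\omega$ pasos su resultado es un orden lineal contable, denso y sin extremos, con todos sus arcos rojos, es decir, una copia roja de $K_{\mathbb Q}$. Si en cambio hay infinitas extracciones, afirmo que $\{y_m:m\in\mathbb N\}$ es una copia azul infinita: para $m<m'$ tenemos $y_{m'}\in Q_{m'}\subseteq Q_{m+1}$ y, por construcci\'on, todo punto de $Q_{m+1}$ est\'a unido a $y_m$ por un arco azul, de modo que el arco $\{y_m,y_{m'}\}$ es azul; as\'\i\ $K_{\{y_m\}}$ es azul, lo que contradice nuestra hip\'otesis. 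El obst\'aculo principal es precisamente el an\'alisis del caso atascado: n\'otese que `ser azul a un v\'ertice com\'un' no produce por s\'\i\ solo una copia azul, y el punto delicado es el anidamiento $Q_0\supseteq Q_1\supseteq\cdots$, que obliga a que cada punto extra\'\i do sea azul a \emph{todos} los posteriores; junto con esto, el otro ingrediente no trivial es la indivisibilidad de $\mathbb Q$, que es lo que permite resolver cada atasco sin destruir la estructura de orden.
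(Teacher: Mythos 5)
Tu propuesta es correcta, pero no hay demostraci\'on del art\'\i culo con la cual compararla: este teorema aparece en la Coda s\'olo por completitud, citado de Erd\H os y Rado sin prueba, de modo que tu argumento es una demostraci\'on aut\'onoma genuina. Sus dos pilares funcionan. El an\'alisis del atasco est\'a bien ejecutado: como $Q_{m+1}$ est\'a contenido en un hueco de $F$, es disjunto de $F$, as\'\i\ que los puntos extra\'\i dos son distintos dos a dos, y el anidamiento $Q_0\supseteq Q_1\supseteq\cdots$ garantiza que para $m<m'$ el punto $y_{m'}$ cae en $Q_{m+1}$, donde todo v\'ertice es azul hacia $y_m$; por tanto infinitas extracciones dar\'\i an un conjunto azul infinito, contra la hip\'otesis, y tras la \'ultima extracci\'on la construcci\'on roja corre indefinidamente y su l\'\i mite es contable, denso y sin extremos, luego isomorfo a $\mathbb Q$ por el teorema de Cantor. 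El \'unico punto en que el argumento descansa en un hecho no demostrado es la indivisibilidad de $\mathbb Q$ bajo particiones finitas; el hecho es cl\'asico y cierto, pero tu sugerencia de probarlo con un argumento de \'arbol an\'alogo al del teorema \ref{thm:partZ} no es la ruta natural: basta notar que en cualquier partici\'on finita $\mathbb Q=A_1\cup\dots\cup A_k$ alg\'un $A_i$ es denso en alg\'un intervalo (pues una uni\'on finita de conjuntos densos en ninguna parte es densa en ninguna parte, y $\mathbb Q$ es denso en s\'\i\ mismo), y todo subconjunto de $\mathbb Q$ denso en un intervalo contiene una copia de $\mathbb Q$. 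Incluye esa l\'\i nea y la prueba queda completamente cerrada. Dos minucias de redacci\'on: la notaci\'on $K_{\{y_m\}}$ deber\'\i a ser $K_{\{y_m:m\in\mathbb N\}}$, y tu construcci\'on es en realidad de `s\'olo ida' con planificaci\'on de requisitos, no propiamente de ida y vuelta.
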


En particular, $\mathbb Q\to(\mathbb Z,\aleph_0)^2$. Los \'ordenes que contienen una copia de 
$\mathbb Q$ se denominan no dispersos. 

\begin{theorem}[Erd\H os y Hajnal {\cite[Corollary 1]{MR141602}}]
Si $L\to(\mathbb Z,\aleph_0)^2$ y $L$ es contable, entonces $L$ es no disperso.
\end{theorem}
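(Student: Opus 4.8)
La estrategia es demostrar el contrarrec\'\i proco: si $L$ es contable y disperso (esto es, no contiene una copia de $\mathbb Q$), entonces $L\nrightarrow(\mathbb Z,\aleph_0)^2$; es decir, quiero construir un coloreo de $K_L$ sin copias rojas de $K_{\mathbb Z}$ y sin ning\'un conjunto infinito azul. Para ello uso el teorema de Hausdorff que caracteriza los \'ordenes dispersos: la clase de los \'ordenes dispersos es la menor clase que contiene a los \'ordenes finitos y es cerrada bajo sumas indexadas por un ordinal y por un ordinal reverso. Esto asigna a cada orden disperso un rango, y procedo por inducci\'on en dicho rango. El caso base (\'ordenes finitos) es trivial, pues no hay copia de $\mathbb Z$ ni conjuntos infinitos. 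Adem\'as, como $\mathbb Z^*=\mathbb Z$ y tanto ``no hay $K_{\mathbb Z}$ rojo'' como ``no hay conjunto infinito azul'' son invariantes bajo reversi\'on, la existencia de un buen coloreo es invariante bajo reversi\'on; aplicando esto a $L^*=\sum_{i<\alpha}L_i^*$ reduzco el caso de una suma indexada por un ordinal reverso al de una suma indexada por un ordinal.

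Queda entonces el paso principal: colorear $L=\sum_{i<\alpha}L_i$, donde $\alpha$ es un ordinal contable y cada $L_i$ ya tiene, por hip\'otesis inductiva, un buen coloreo. Aqu\'\i\ est\'a el obst\'aculo: colorear de rojo \emph{todos} los arcos entre bloques distintos no funciona, pues si alg\'un bloque contiene un $\omega^*$ rojo (algo inevitable cuando $L_i$ es, por ejemplo, $\omega^*$), ese $\omega^*$ se completa, usando puntos de bloques superiores, a una copia roja de $\omega^*+\omega=\mathbb Z$; de hecho, por el teorema \ref{thm:partZ}, ninguna partici\'on finita evita esto cuando $L$ contiene $\omega^*\cdot\omega$. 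La idea clave para sortearlo es usar un coloreo de los arcos \emph{entre} bloques que dependa de las posiciones. Como $L$ es contable, fijo una inyecci\'on (un etiquetado) $\ell\!:L\to\mathbb N$. Para $x\in L_i$ y $y\in L_j$ con $i\ne j$, coloreo el arco $\{x,y\}$ de rojo si $\ell$ y $<$ coinciden en el par (es decir, el menor en $<$ tiene la menor etiqueta) y de azul si discrepan; los arcos dentro de cada bloque se colorean con el coloreo dado por la inducci\'on.

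Verificar que este coloreo funciona es la parte medular, y el punto es que $\mathbb N$ es un buen orden, que se usa en las dos direcciones. Para los conjuntos azules: dentro de cada bloque son finitos (inducci\'on), y como todo par azul entre bloques distintos \emph{discrepa} con $\ell$, al movernos hacia arriba en $<$ a trav\'es de bloques las etiquetas decrecen estrictamente; una sucesi\'on estrictamente decreciente en $\mathbb N$ es finita, as\'\i\ que un conjunto azul s\'olo puede tocar finitos bloques y por tanto es finito. Para las copias rojas de $\mathbb Z$: en una suma indexada por un ordinal, toda cadena descendente infinita tiene \'\i ndices de bloque eventualmente constantes, de modo que la parte $\omega^*$ de un hipot\'etico $\mathbb Z$ rojo vive, salvo una cola finita, en un \'unico bloque $L_{i_0}$, y all\'\i\ sus etiquetas forman un subconjunto infinito de $\mathbb N$, luego no acotado. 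La copia de $\mathbb Z$ contiene entonces un punto $y$ estrictamente por encima de toda esa cola infinita: si $y$ estuviera en $L_{i_0}$, entonces $L_{i_0}$ tendr\'\i a una copia roja de $\mathbb Z$ (pues su parte ascendente no cabe sobre un $\omega^*$ dentro del bloque), contradiciendo la hip\'otesis inductiva; y si $y$ est\'a en un bloque superior, ser\'\i a rojo con todos los puntos de la cola, forzando $\ell(y)>\ell(x)$ para infinitos $x$ de etiqueta no acotada, lo cual es imposible. En ambos casos hay contradicci\'on, as\'\i\ que no hay $K_{\mathbb Z}$ rojo, y la inducci\'on se cierra. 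El obst\'aculo real, como se anticip\'o, es exactamente el dise\~no del coloreo entre bloques; una vez que se explota la buena fundamentaci\'on de $\mathbb N$ hacia abajo (para los conjuntos azules) y hacia arriba (para impedir completar un $\omega^*$ rojo), lo dem\'as es rutinario.
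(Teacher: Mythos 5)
El art\'\i culo no demuestra este teorema: lo cita de Erd\H os y Hajnal en la coda, expl\'\i citamente ``por completitud'', as\'\i\ que no hay una prueba interna con la cual comparar la tuya y la eval\'uo por sus propios m\'eritos. Tu demostraci\'on es correcta en lo esencial y es, de hecho, el argumento cl\'asico: contrarrec\'\i proco, teorema de Hausdorff para hacer inducci\'on en el rango de los \'ordenes dispersos, reducci\'on de las sumas sobre ordinales reversos al caso ordinal usando que $\mathbb Z^*=\mathbb Z$, y como paso clave el coloreo de comparaci\'on entre bloques (rojo si la inyecci\'on $\ell:L\to\mathbb N$ concuerda con el orden, azul si discrepa, al estilo del coloreo de Sierpi\'nski). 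La buena fundamentaci\'on de $\mathbb N$ hace las dos verificaciones, tal como lo describes: un conjunto azul no puede atravesar infinitos bloques porque dar\'\i a una sucesi\'on estrictamente decreciente de naturales, y un $\omega^*$ rojo atrapado en un bloque tiene etiquetas no acotadas, as\'\i\ que ning\'un punto de un bloque superior puede ser rojo con todos sus elementos. Tambi\'en identificas correctamente d\'onde se usa que $L$ es contable (la existencia de $\ell$) y por qu\'e un coloreo ingenuo entre bloques fracasa. Frente a la simple cita, esta ruta aporta una prueba elemental y autocontenida, consistente con las herramientas de la nota.

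Hay un solo punto que debes reescribir: la exclusi\'on de la copia roja de $K_{\mathbb Z}$. Tal como est\'a, aplicas la dicotom\'\i a a \emph{un} punto $y$ por encima de la cola $D'\subseteq L_{i_0}$ de tipo $\omega^*$, y en el caso $y\in L_{i_0}$ concluyes de inmediato que $L_{i_0}$ tiene una copia roja de $\mathbb Z$; con un solo punto eso no se sigue (s\'olo se obtiene una copia roja de $\omega^*+1$, que la hip\'otesis inductiva no proh\'\i be), y la justificaci\'on entre par\'entesis presupone lo que se quiere concluir. El arreglo es puramente expositivo: tu argumento de etiquetas no acotadas se aplica a \emph{cada} punto de la supuesta copia $H$ situado por encima de $D'$, y excluye que cualquiera de ellos est\'e en un bloque $L_j$ con $j>i_0$; por lo tanto \emph{toda} la parte ascendente de $H$ queda dentro de $L_{i_0}$, y es ella, junto con $D'$, la que forma la copia roja de $\mathbb Z$ dentro del bloque que contradice la hip\'otesis inductiva. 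Con ese reordenamiento, la prueba queda completa.
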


Espero que los resultados mencionados en esta nota, y la historia acompa\~nante, ayuden a ilustrar 
la descripci\'on intuitiva dada por Motzkin \cite{MR214478} de la teor\'\i a de Ramsey: el desorden 
completo es imposible.

{\small\subsection*{Agradecimientos}
El autor le agradece a Ramiro de la Vega por conversaciones en las etapas iniciales de este 
proyecto, y a Ramiro, David Fern\'andez y Sharif Vel\'asquez por comentarios sobre esta nota. 
Gracias a Alexander Cardona y a los editores por la invitaci\'on a publicar.}

\bibliographystyle{amsalpha}
\bibliography{zramsey}

\providecommand{\bysame}{\leavevmode\hbox to3em{\hrulefill}\thinspace}
\providecommand{\MR}{\relax\ifhmode\unskip\space\fi MR }
\providecommand{\MRhref}[2]{%
  \href{http://www.ams.org/mathscinet-getitem?mr=#1}{#2}
}
\providecommand{\href}[2]{#2}
\begin{thebibliography}{Ram29}

\bibitem[EH63]{MR141602}
P.~Erd\H{o}s and A.~Hajnal, \emph{On a classification of denumerable order
  types and an application to the partition calculus}, Fund. Math. \textbf{51}
  (1962/63), 117--129. \MR{141602}

\bibitem[ER52]{MR65615}
P.~Erd\H{o}s and R.~Rado, \emph{Combinatorial theorems on classifications of
  subsets of a given set}, Proc. London Math. Soc. (3) \textbf{2} (1952),
  417--439. \MR{65615}

\bibitem[ER53]{MR0058687}
Paul Erd{\H{o}}s and Richard Rado, \emph{A problem on ordered sets}, J. London
  Math. Soc. \textbf{28} (1953), 426--438. \MR{0058687 (15,410b)}

\bibitem[ER56]{MR81864}
P.~Erd\H{o}s and R.~Rado, \emph{A partition calculus in set theory}, Bull.
  Amer. Math. Soc. \textbf{62} (1956), 427--489. \MR{81864}

\bibitem[Mot67]{MR214478}
Theodore~S. Motzkin, \emph{Cooperative classes of finite sets in one and more
  dimensions}, J. Combinatorial Theory \textbf{3} (1967), 244--251. \MR{214478}

\bibitem[Ram29]{MR1576401}
F.~P. Ramsey, \emph{On a {P}roblem of {F}ormal {L}ogic}, Proc. London Math.
  Soc. (2) \textbf{30} (1929), no.~4, 264--286. \MR{1576401}

\bibitem[Spe57]{MR88454}
Ernst Specker, \emph{Teilmengen von {M}engen mit {R}elationen}, Comment. Math.
  Helv. \textbf{31} (1957), 302--314. \MR{88454}

\end{thebibliography}

\end{document}